\documentclass[12pt,a4paper]{amsart}
\newcommand{\Title}{A Complete Characterization of Cartan Inclusions of Finite Dimensional $C^*$-algebras}%
\newcommand{\ShortTitle}{\Title}%

\newcommand{\AuthorOne}{Indrajit Ghosh}%
\newcommand{\AuthorOneAddr}{%
	Department of Mathematics and Statistics, Indian Institute of Technology Kanpur, Uttar Pradesh 208 016, India
}%

\newcommand{\AuthorOneEmail}{%
	indrajitghosh912@gmail.com, indrajitg@iitk.ac.in
}%

\newcommand{\AuthorTwo}{Sumit Kumar}%
\newcommand{\AuthorTwoAddr}{Department of Mathematics and Statistics, Indian Institute of Technology Kanpur, Uttar Pradesh 208 016, India
}%
\newcommand{\AuthorTwoEmail}{sumitkumar.sk809@gmail.com}

\newcommand{\SubjectClassText}{Primary 46L05, 47L40; Secondary 46L10}

\newcommand{\Keywords}{Generalised Cartan Subalgebras, Regularity, $C^*$-algebra}

\newcommand{\pdfTitle}{\Title}
\newcommand{\pdfAuthor}{Indrajit Ghosh}
\newcommand{\pdfSubject}{Mathematics, Research paper}
\newcommand{\pdfKeywords}{\Keywords}
\newcommand{\pdfCreator}{TeXlive}
\newcommand{\pdfCreationDate}{\today}
\newcommand{\pdfColorLink}{true}
\newcommand{\pdfLinkColor}{cyan}
\newcommand{\pdfUrlColor}{blue}
\newcommand{\pdfCiteColor}{magenta}

\usepackage[top=0.9in, bottom=1in, left=0.7in, right=0.7in]{geometry}
\usepackage{amsmath, amssymb, amsthm} 
\usepackage[utf8]{inputenc}
\usepackage[T1]{fontenc}
\usepackage{mathtools}
\usepackage{mathrsfs} 
\usepackage{xfrac} 
\usepackage{dsfont} 
\usepackage{array}
\usepackage{verbatim}
\usepackage{graphicx}
\usepackage{makecell}
\usepackage{mdframed}
\usepackage{enumitem} 
\usepackage{hyperref}
\hypersetup{
	pdftitle={\pdfTitle},
	pdfauthor={\pdfAuthor},
	pdfsubject={\pdfSubject},
	pdfcreationdate={\pdfCreationDate},
	pdfcreator={\pdfCreator},
	pdfkeywords={\pdfKeywords},
	colorlinks=\pdfColorLink,
	linkcolor={\pdfLinkColor},
	urlcolor=\pdfUrlColor,
	citecolor=\pdfCiteColor,
	pdfpagemode=UseOutlines,
}
\usepackage{tikz-cd} 
\usepackage{lipsum}
\usetikzlibrary{matrix,arrows}
\usepackage[english]{babel}
\usepackage{lmodern}
\usepackage{bbm} 
\usepackage[dvipsnames]{xcolor}
\usepackage[most]{tcolorbox}
\usepackage{xparse}

\theoremstyle{plain}
\newtheorem{theorem}{Theorem}[section]
\newtheorem{prop}[theorem]{Proposition}
\newtheorem{lem}[theorem]{Lemma}
\newtheorem{cor}[theorem]{Corollary}

\theoremstyle{definition}
\newtheorem{definition}[theorem]{Definition}

\theoremstyle{remark}
\newtheorem{remark}[theorem]{Remark}

\numberwithin{equation}{section}

\newtheoremstyle{ser}
{8pt}
{8pt}
{\it}
{}
{\sf}
{:}
{6mm}
{}

\theoremstyle{ser}

\newtheoremstyle{serr}
{8pt}
{8pt}
{\normalfont}
{}
{\sf}
{.}
{6mm}
{}

\theoremstyle{serr}

\theoremstyle{ser}

\theoremstyle{ser}
\newtheorem{qn}{Question}

\newtheoremstyle{collabquestion}
  {8pt}
  {8pt}
  {\normalfont}
  {}
  {\sffamily\bfseries\color{blue!70!black}}
  {.}
  {.5em}
  {}

\theoremstyle{collabquestion}
\newtheorem{qninner}{Question}

\definecolor{indraRed}{rgb}{0.593, 0.183, 0.183}
\definecolor{indraPink}{rgb}{0.858, 0.188, 0.478}
\definecolor{indraBlue}{rgb}{0, 0.199, 0.398}
\definecolor{madridBlue}{rgb}{0.199, 0.199, 0.695}
\definecolor{metropolisThemeColor}{rgb}{0.105, 0.214, 0.234}
\definecolor{metropolisBarColor}{rgb}{0.984, 0.0.515, 0.015}
\definecolor{UBCblue}{rgb}{0.04706, 0.13725, 0.26667} 
\definecolor{UBCgrey}{rgb}{0.3686, 0.5255, 0.6235} 

\makeatletter
\def\mathcolor#1#{\@mathcolor{#1}}
\def\@mathcolor#1#2#3{%
	\protect\leavevmode
	\begingroup
	\color#1{#2}#3%
	\endgroup
}
\makeatother

\makeatletter
\def\ps@headings{\ps@empty
  \def\@evenhead{\normalfont\scriptsize\hfil \leftmark{}{}\hfil}%
  \def\@oddhead{\normalfont\scriptsize\hfil \rightmark{}{}\hfil}%
  \let\@mkboth\markboth
  \def\@evenfoot{\normalfont\scriptsize\hfil\thepage\hfil}%
  \def\@oddfoot{\normalfont\scriptsize\hfil\thepage\hfil}%
}
\makeatother

\makeatletter
\def\author@andify{%
  \nxandlist {\unskip ,\penalty-1 \space\ignorespaces}%
    {\unskip {} \@@and~}%
    {\unskip \penalty-2 \space \@@and~}%
}
\makeatother

\definecolor{indraRed}{rgb}{0.593, 0.183, 0.183}
\definecolor{indraPink}{rgb}{0.858, 0.188, 0.478}
\definecolor{indraBlue}{rgb}{0, 0.199, 0.398}
\definecolor{madridBlue}{rgb}{0.199, 0.199, 0.695}
\definecolor{metropolisThemeColor}{rgb}{0.105, 0.214, 0.234}
\definecolor{metropolisBarColor}{rgb}{0.984, 0.0.515, 0.015}
\definecolor{UBCblue}{rgb}{0.04706, 0.13725, 0.26667} 
\definecolor{UBCgrey}{rgb}{0.3686, 0.5255, 0.6235} 

\newcommand{\C}{\mathbb{C}}

\newcommand{\N}{\mathbb{N}}

\NewDocumentCommand{\mn}{m O{\mathbb{C}}}
{\mathbb{M}_{#1}(#2)} 

     \newcommand{\sA}{\mathcal A}		
     \newcommand{\sB}{\mathcal B}

     \newcommand{\sG}{\mathcal G}

     \newcommand{\sJ}{\mathcal J}

     \newcommand{\sN}{\mathcal N}

     \newcommand{\sU}{\mathcal U}

     \newcommand{\sZ}{\mathcal Z}

\NewDocumentCommand{\nor}{g}
  {\sN\IfValueT{#1}{_{#1}}}

\NewDocumentCommand{\unor}{g}
  {\sU\sN\IfValueT{#1}{_{#1}}}

\NewDocumentCommand{\grnor}{g}
  {\sG\sN\IfValueT{#1}{_{#1}}}

\newcommand{\maxprime}{\textsc{Max}$'$\,}

\begin{document}
	
        \title[\ShortTitle]{\MakeUppercase\Title}
	\author{\AuthorOne}
	\address[Ghosh]{\AuthorOneAddr}
	
	\email{\AuthorOneEmail}

    \author{\AuthorTwo}
	\address[Kumar]{\AuthorTwoAddr}
	\email{\AuthorTwoEmail}

	\date{}
	\subjclass{\SubjectClassText}
	\keywords{\Keywords}
	
	
	\begin{abstract}
		We give a complete characterization of Cartan inclusions of finite dimensional $C^*$-algebras in terms of their inclusion matrices. More precisely, for a unital inclusion $\mathcal{B}\subseteq\mathcal{A}$ with inclusion matrix $\Lambda=(\Lambda_{ij})$, where Cartan means that $\mathcal{B}$ is a \emph{generalised Cartan subalgebra} of $\mathcal{A}$ in the sense of Exel, we prove that the inclusion is Cartan if and only if
\[
    \sum_i \Lambda_{ij}\leq 1
\]
for every $j$. We call matrices satisfying this condition \emph{multiplicity free}. Thus, our characterization provides a purely combinatorial criterion for determining when a finite dimensional inclusion is Cartan. We further prove that every Cartan inclusion admits a unique conditional expectation from $\mathcal{A}$ onto $\mathcal{B}$. Conversely, we show that, for unital inclusions of finite dimensional $C^*$-algebras, the uniqueness of the conditional expectation is sufficient for the inclusion to be Cartan. Consequently, a unital inclusion $\mathcal{B}\subseteq\mathcal{A}$ of finite dimensional $C^*$-algebras is Cartan if and only if there exists a unique conditional expectation from $\mathcal{A}$ onto $\mathcal{B}$.
	\end{abstract}

	\maketitle%
        \thispagestyle{empty}%


    \section{Introduction}
    Cartan masas are among the most well-studied and important objects in the theory of von Neumann algebras; see, for example, \cite{Dix1954}. Building on the work of Feldman--Moore \cite{Feldman_Moore} and Kumjian \cite{Kumjian_86}, Renault introduced a natural notion of Cartan subalgebras for $C^*$-algebras \cite{Renault2008CartanSI}. In these works, the Cartan subalgebras were \emph{abelian}. In \cite{Exel2011}, Exel introduced a notion of Cartan subalgebras that makes sense for general, not necessarily commutative, $C^*$-subalgebras. He called these subalgebras \emph{generalised Cartan} subalgebras. These, in particular, generalise the earlier notions in which the term ``Cartan'' was used. The main result of \cite{Exel2011}, namely Theorem 14.7, gives a characterisation of generalised Cartan subalgebras of a $C^*$-algebra in terms of the reduced cross-sectional $C^*$-algebra of a suitable Fell bundle over the algebra.

The main purpose of this article is to study the notion of generalised Cartan subalgebras in the setting of finite-dimensional $C^*$-algebras. Since many useful linear algebraic tools are available in finite-dimensions, we seek a more combinatorial characterisation in this setting. To the best of our knowledge, such a study has not yet been undertaken. One of the most powerful tools available for studying inclusions of finite-dimensional $C^*$-algebras is the \emph{inclusion matrix}. Specifically, every unital inclusion of finite-dimensional $C^*$-algebras gives rise to a rectangular matrix with non-negative integer-valued entries, and conversely. Thus, studying such inclusions is equivalent to studying their corresponding inclusion matrices. In this article, we investigate the inclusion matrix associated with a unital inclusion $\sB\subseteq \sA$ where $\sB$ is a generalised Cartan subalgebra of $\sA$. More precisely, we ask the following question:
\begin{qn}
Which inclusion matrix $\Lambda$ can give rise to a unital inclusion $\sB \subseteq \sA$ such that $\sB$ becomes a generalised Cartan subalgebra of $\sA$, and conversely?
\end{qn}

Since a generalised Cartan subalgebra, by definition (see Definition \ref{def:gen_cartan}), comes with a faithful conditional expectation from the parent algebra onto the subalgebra, answering the above question requires us to study the existence and uniqueness of conditional expectations for finite-dimensional inclusions. We address this in \S~\ref{sec:cond_exp}. The existence of conditional expectations is well known in the finite dimensional setting; in fact, there always exists a faithful conditional expectation onto any subalgebra of a finite dimensional $C^*$-algebra. On the other hand, the uniqueness question is more subtle. A well-known sufficient condition for the uniqueness of the conditional expectation from $\sA$ to $\sB$ is that $\sB'\cap \sA \subseteq \sB$ (see \cite[Corollary 1.4.3]{watatani1990}, \cite[IX.4.3]{takesaki02}). In general, however, this condition is not necessary for uniqueness. In Proposition \ref{prop:rel_com_iff_uniq_cond}, we prove that, in the finite dimensional setting, this condition is actually necessary for the uniqueness of the conditional expectation. This leads to the following natural question:
\begin{qn}
Which subalgebras $\sB\subseteq \sA$ exactly satisfy $\sB'\cap \sA \subseteq \sB$, and conversely?
\end{qn}

In searching for an answer to this question, we realised that the subalgebras in question must be what we call \emph{multiplicity free} (see Definition \ref{def:mult_free_subalg}). We devote \S~\ref{sec:mult_free} to the study of these subalgebras. It is observed that every masa in a finite dimensional $C^*$-algebra is multiplicity free (see Proposition \ref{prop:masa_is_mult_free}). These subalgebras are defined in terms of their corresponding inclusion matrices. The inclusion matrix of such a subalgebra has exactly one entry equal to $1$ in each column, with all remaining entries equal to zero. We call such rectangular matrices \emph{multiplicity free} matrices (see Definition \ref{def:multiplicity_free_mat}). At the beginning of \S~\ref{sec:mult_free}, we therefore establish several properties of matrices of this type. Although these results are purely combinatorial in nature, they are essential for the rest of the article. In Proposition \ref{lem:mult_free_iff_rel_com}, we prove that multiplicity free subalgebras are precisely the subalgebras $\sB$ satisfying $\sB'\cap \sA \subseteq \sB$.

Using these results, in \S~\ref{sec:gen_cartan}, we are able to characterise the inclusion matrices corresponding to \emph{Cartan inclusions}. In fact, our main result proves the following:

\noindent \textbf{Theorem} (Theorem \ref{thm:char_gen_cartan}). \textsl{\emph{For a finite-dimensional unital inclusion $\sB \subseteq \sA$ the following statements are equivalent:
\begin{itemize}
\item[(i)] $\sB$ is a generalised Cartan subalgebra of $\sA$.
\item[(vi)] $\sB$ is multiplicity free.
\item[(vii)] there is a unique conditional expectation from $\sA$ onto $\sB$.
\end{itemize}
}
}
This shows that Cartan inclusions are precisely those for which the inclusion matrices are multiplicity free. We conclude our study by listing the total number of Cartan subalgebras for a few given inclusions of finite dimensional $C^*$-algebras.

    \section{Preliminaries}
    \label{sec:prelims}
    The primary purpose of this section is to establish the notation and conventions used throughout the article. We recall a standard result on finite-dimensional $C^*$-algebras, which can be found in standard references such as \cite{davidson_book}. We then introduce some basic definitions concerning normalizers and regular $C^*$-subalgebras.

\noindent\textbf{Notations.} Throughout this article, given $d, p \in\N$ we will use the notation $\mn{d} \otimes \mathbb{I}_p$ to denote the unital $C^*$-subalgebra of $\mn{dp}$ defined by
\[
\mn{d} \otimes \mathbb{I}_p
:=
\left\{
A\otimes \mathbb{I}_p
:=
\begin{pmatrix}
A & & & \\
& A & & \\
& & \ddots & \\
& & & A
\end{pmatrix}
:\;
A\in \mn{d}
\right\}
\subseteq \mn{dp},
\]
where the matrix has $p$ diagonal blocks, each equal to $A$.

We shall also use the following notation for block diagonal subalgebras. Let
\[
\mn{d_j} \otimes \mathbb{I}_{p_j}\subseteq \mn{d_j p_j},
\qquad j = 1, \dots, k.
\]
Then
\[
\bigoplus_{j = 1}^{k} \left( \mn{d_j} \otimes \mathbb{I}_{p_j} \right)
:=
\left\{
\begin{pmatrix}
A_1 & & & \\
& A_2 & & \\
& & \ddots & \\
& & & A_k
\end{pmatrix}
:\;
A_j\in \mn{d_j} \otimes \mathbb{I}_{p_j}\subseteq \mn{d_j p_j},
\; 1 \le j \le k
\right\},
\]
which is a unital $C^*$-subalgebra of $\mn{n}$, where $n=\sum_{j=1}^{k} d_j p_j$.

For a tuple $\Vec{n}=(n_1,\dots,n_l)\in\N^l$, we use the notation
\[
\mn{\Vec{n}}
:=
\bigoplus_{i=1}^l \mn{n_i}.
\]

Let $\Vec{n}\in\N^l$ and $\Vec{d}\in\N^k$ be such that there exists a matrix
\[
\Lambda=[\Lambda_{ij}]\in \mn{l\times k}[\N\cup\{0\}]
\]
satisfying
\[
\Lambda\Vec{d}=\Vec{n}.
\]
Then the pair $(\Vec{d},\Lambda)$ determines a $*$-homomorphism
\begin{align*}
\Phi_\Lambda:\mn{\Vec{d}}&\longrightarrow \mn{\Vec{n}},\\
x&\longmapsto
\left(\Phi_\Lambda^{(1)}(x),\dots,\Phi_\Lambda^{(l)}(x)\right),
\end{align*}
where, for $x=(x_1,\dots,x_k)\in\mn{\Vec{d}}$,
\[
\Phi_\Lambda^{(i)}(x)
:=
\bigoplus_{j=1}^k
\left(x_j\otimes \mathbb{I}_{\Lambda_{ij}}\right),
\qquad i=1,\dots,l.
\]
Here, by convention, a summand with $\Lambda_{ij}=0$ is omitted. Observe that $\Phi_\Lambda$ is unital if and only if every row of $\Lambda$ is non-zero. Likewise, $\Phi_\Lambda$ is injective if and only if every column of $\Lambda$ is non-zero. Henceforth, we assume that $\Lambda$ is such that $\Phi_\Lambda$ is a unital embedding.

We write
\[
\sB(\Vec{d},\Lambda)
:=
\Phi_\Lambda\!\left(\mn{\Vec{d}}\right)
\subseteq
\mn{\Vec{n}}.
\]

It is immediate that $\sB(\Vec{d},\Lambda)$ is a unital $C^*$-subalgebra of $\mn{\Vec{n}}$. It is known that unital $C^*$-subalgebra of $\mn{\Vec{n}}$ arises in this way:

\begin{prop}[\cite{davidson_book}]\label{prop:subalg_of_fin_dim_alg}
Let $\Vec{n}=(n_1,\dots,n_l)\in\N^l$. A subset
$
\sB\subseteq\mn{\Vec{n}}
$
is a unital $C^*$-subalgebra if and only if there exist a unitary $u$ in $\mn{\vec{n}}$, $k\in\N$, a tuple $\Vec{d}\in\N^k$, and a matrix
\[
\Lambda\in\mathbb{M}_{l\times k}(\N\cup\{0\})
\]
satisfying
\[
\Lambda\Vec{d}=\Vec{n},
\]
such that
\[
u\sB u^*=\sB(\Vec{d},\Lambda).
\]
\end{prop}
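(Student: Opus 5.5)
The ``if'' direction is the easy one: $u^*\sB(\Vec{d},\Lambda)u$ is the image of the unital injective $*$-homomorphism $x\mapsto u^*\Phi_\Lambda(x)u$. It is therefore a $*$-closed unital subalgebra, and since it is finite dimensional it is norm closed. So all the work lies in the ``only if'' direction, which I would prove in three stages: decompose $\sB$, decompose each block $\mn{n_i}$ of $\mn{\Vec{n}}$ as a $\sB$-module, and then assemble a single unitary.

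\emph{Stage 1.} Since $\sB$ is a finite dimensional $C^*$-algebra, the Wedderburn/Artin structure theorem gives a $*$-isomorphism $\sB\cong\mn{\Vec{d}}$ for some $\Vec{d}\in\N^k$. Concretely, I would take the minimal central projections $z_1,\dots,z_k$ of $\sB$, which sum to $1$ because the inclusion is unital, and note that each $z_j\sB\cong\mn{d_j}$. Let $\pi:\mn{\Vec{d}}\to\sB$ be this isomorphism. For each $i$, let $\pi_i$ be the composition of $\pi$ with the projection onto the $i$-th summand $\mn{n_i}$. Then $\pi_i$ is a unital representation of $\mn{\Vec{d}}$ on $\C^{n_i}$; it is unital because $1_\sB=1_{\mn{\Vec{n}}}$.

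\emph{Stage 2.} This is the main step: classifying unital representations of $\mn{\Vec{d}}$. The projections $\pi_i(z_j)$ split $\C^{n_i}$ into invariant subspaces $H_{ij}$, and on $H_{ij}$ the algebra $\mn{d_j}$ acts unitally. I then need the fact that every unital representation of $\mn{d}$ on $H$ is unitarily equivalent to $x\mapsto x\otimes\mathbb{I}_p$. To prove it, let $e_{ab}$ be the images of the matrix units and set $K=e_{11}H$ with an orthonormal basis $\xi_1,\dots,\xi_p$. The vectors $e_{a1}\xi_r$ are then orthonormal and span $H$, because $\sum_a e_{aa}=1$ and $e_{aa}=e_{a1}e_{1a}$. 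Define $\Lambda_{ij}=p=\dim e_{11}H_{ij}$, which may be $0$. Counting dimensions gives $n_i=\sum_j\Lambda_{ij}d_j$, i.e.\ $\Lambda\Vec{d}=\Vec{n}$. Ordering the basis suitably, with blocks grouped by $j$ and, within $H_{ij}$, ordered by $r$ first and then $a$, produces a unitary $u_i\in\mn{n_i}$ with $u_i\pi_i(x)u_i^*=\Phi^{(i)}_\Lambda(x)$. The only real care needed here is getting the basis order to match the convention $x_j\otimes\mathbb{I}_{\Lambda_{ij}}$ (block diagonal copies of $x_j$).

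\emph{Stage 3.} Put $u=(u_1,\dots,u_l)$, which is a unitary in $\mn{\Vec{n}}$. Then $u\pi(x)u^*=\Phi_\Lambda(x)$ for all $x$, so $u\sB u^*=\sB(\Vec{d},\Lambda)$. Finally, I would check the standing assumptions on $\Lambda$. Every row is nonzero because $n_i\ge1$. Every column is nonzero because $\pi$ is injective, so each $z_j$ is nonzero in some summand.
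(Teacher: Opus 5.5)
Your proof is correct and is the standard argument behind the result, which the paper cites from Davidson without giving a proof: the structure theorem $\sB\cong\mn{\vec{d}}$, the matrix-unit classification of unital representations of $\mn{d}$ as multiples $x\mapsto x\otimes\mathbb{I}_p$, and assembling the blockwise unitaries into $u=(u_1,\dots,u_l)$. There are only two cosmetic slips: $\pi_i(z_j)$ should read $\pi_i(\pi^{-1}(z_j))$, i.e.\ the $i$-th component of $z_j$; and in the ``if'' direction injectivity of $\Phi_\Lambda$ is neither needed nor implied by the stated hypotheses, since the image of any unital $*$-homomorphism is already a unital $C^*$-subalgebra.
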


\begin{definition}
    The matrix $\Lambda$ is called the \emph{inclusion matrix} of the subalgebra $\sB(\Vec{d}, \Lambda)$.
\end{definition}

\noindent\textbf{Convention.} From now on, let $\vec{d}\in \N^k$, $\vec{n}\in \N^l$, and let $\Lambda$ be an $l \times k$ matrix satisfying $\Lambda \vec{d}=\vec{n}$. We will use the phrase
\[
``\text{the subalgebra }\mn{\vec{d}} \subseteq \mn{\vec{n}} \text{ with inclusion matrix } \Lambda"
\]
to mean the unital $C^*$-subalgebra $\sB(\vec{d}, \Lambda)$ of $\mn{\vec{n}}$.

Let $\sB \subset \sA$ be an inclusion clear
of $C^*$-algebras. We define the normalizer and regularity for arbitrary inclusions.

\begin{definition}
The \emph{normalizer} of $\sB$ in $\sA$ is
\[
\nor{\sA}(\sB)
=
\{\, n \in \sA \mid n\sB n^*  \cup n^*\sB n \subseteq \sB \,\}.
\]
It is easily seen that this set is a norm closed $*$-semigroup containing $\sB$.
\end{definition}

\begin{definition}\label{def:regularity}
Let $\mathcal{B}$ be a $C^*$-subalgebra of $\mathcal{A}$. We say that
$\mathcal{B}$ is \emph{regular} in $\mathcal{A}$ if the $C^*$-algebra
generated by $\nor{\mathcal{A}}(\mathcal{B})$ coincides with
$\mathcal{A}$; that is,
\[
C^*(\nor{\mathcal{A}}(\mathcal{B}))=\mathcal{A}.
\]
\end{definition}

We now introduce the definition of a regular matrix, as defined in \cite{BGK2026}. This class of matrices was first introduced in \cite{Bakshi2026}, where the authors referred to them as \emph{normaliser matrices}. We subsequently realised that the two notions are equivalent; see \cite[Proposition 2.9]{BGK2026}.

\begin{definition}\cite[Definition 1.1]{BGK2026}\label{def:regular_mat}
    Let $\Lambda=(\Lambda_{ij})$ be an $l\times k$ matrix with nonnegative integer entries. For each $i=1,\ldots,l$, define the support of the $i$th row by
\[
S_i:=\{\,j:\Lambda_{ij}>0\,\}.
\]
The matrix $\Lambda$ is called \emph{regular} if the following conditions hold:
\begin{enumerate}
    \item[(i)] For each $i=1,\ldots,l$, there exists an integer $p_i\in\mathbb{N}$ such that
    \[
    \Lambda_{ij}=p_i,\qquad \forall\, j\in S_i.
    \]
    Equivalently, all positive entries in the $i$th row are equal.

    \item[(ii)] For each $i=1,\ldots,l$, any two columns indexed by $S_i$ are identical; that is,
    \[
    \vec{\Lambda}_j=\vec{\Lambda}_{j'},\qquad \forall\, j,j'\in S_i.
    \]
\end{enumerate}
\end{definition}
This regular matrix played a crucial role in characterizing general regular inclusions of finite-dimensional $C^*$-algebras. In \cite{BGK2026}, we proved the following:

\begin{theorem}\cite[Theorem 5.11]{BGK2026}\label{thm:regularity_char}
    Let $\vec{n} = (n_1,\dots,n_l)^T\in\mathbb{N}^l$, $\vec{d}=(d_1,\dots,d_k)^T\in\mathbb{N}^k$, and let $\Lambda=[\Lambda_{ij}]_{l\times k}$ satisfy $\Lambda\vec{d}=\vec{n}.$
Then the algebra $\mn{\vec{d}}$ is regular in $\mn{\vec{n}}$ if and only if $\Lambda$ is regular.
\end{theorem}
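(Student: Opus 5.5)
Since every ingredient is invariant under unitary conjugation by an element of $\mn{\vec n}$, I would work directly with $\sA=\mn{\vec n}$ and $\sB=\sB(\vec d,\Lambda)$. Let $q_j$ be the minimal central projections of $\sB$, one for each column $j$. A first observation is that condition (i) of Definition \ref{def:regular_mat} follows from condition (ii): if $j,j'\in S_i$ have identical columns, then in particular $\Lambda_{ij}=\Lambda_{ij'}$. So regularity of $\Lambda$ simply says that any two columns meeting a common row are equal. The proof then has two directions, organised around a candidate algebra
\[
\mathcal D=\{x\in\sA:\ q_j x q_{j'}=0 \text{ whenever } \vec\Lambda_j\neq\vec\Lambda_{j'}\}.
\]
Since $\sB$ is commutative in the relevant sense, $q_j x q_{j'}$ is nonzero only when $j,j'\in S_i$ for some $i$.

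\textbf{($\Rightarrow$).} I would show $\nor{\sA}(\sB)\subseteq\mathcal D$. Given a normalizer $n$, in finite dimensions the partial isometry $v$ in its polar decomposition is again a normalizer (it is a limit of polynomials in $n$ and $n^*n$). Because $1\in\sB$, both $v^*v=v^*1v$ and $vv^*$ lie in $\sB$. Now take a minimal projection $e\le v^*v$ of $\sB$ lying under $q_{j'}$.

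\begin{itemize}
\item $vev^*$ is a projection in $\sB$, and it is minimal, by the symmetric argument with $v^*$.
\item Since $v\in\sA$ is block diagonal, $\operatorname{rank}(vev^*)$ in block $i$ equals $\operatorname{rank}(e)$ in block $i$, which is $\Lambda_{ij'}$.
\item A minimal projection of $\sB$ under $q_j$ has block ranks $\vec\Lambda_j$. Hence if $vev^*\le q_j$, then $\vec\Lambda_j=\vec\Lambda_{j'}$.
\end{itemize}

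Summing over such $e$ shows $q_jvq_{j'}=0$ unless the columns agree, so $v\in\mathcal D$, and therefore $n=v|n|\in\mathcal D$. The set $\mathcal D$ is a $*$-subalgebra, because equality of columns is an equivalence relation. Thus regularity forces $\mathcal D=\sA$. If two columns $j\neq j'$ with $j,j'\in S_i$ differed, then $q_j z_i\,\sA\, z_i q_{j'}\neq0$ would escape $\mathcal D$, where $z_i$ is the central projection of block $i$. This gives condition (ii).

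\textbf{($\Leftarrow$).} I would exhibit enough normalizers to generate $\sA$. First, unitaries of $\sB'\cap\sA$ normalize $\sB$. This relative commutant is $\bigoplus_i\bigoplus_j I_{d_j}\otimes\mn{\Lambda_{ij}}$, localized block by block. Together with $\sB$ these generate $\bigoplus_i\bigoplus_{j\in S_i}\mn{d_j}\otimes\mn{\Lambda_{ij}}$, which is the block-diagonal part of each $\mn{n_i}$. Second, for $j,j'$ with identical columns and a partial isometry $w\in\mathbb M_{d_j\times d_{j'}}$, I would define $v$ to act as $w\otimes I_{\Lambda_{i'j}}$ from the $j'$-copies to the $j$-copies in every row $i'$ simultaneously. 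This is possible precisely because $\Lambda_{i'j}=\Lambda_{i'j'}$ for all $i'$. Then $v(\Phi_\Lambda(x))v^*$ is $\Phi_\Lambda$ of the element with $wx_{j'}w^*$ in slot $j$, so it lies in $\sB$; likewise for $v^*\sB v$. Multiplying such $v$ by the block-localized elements of $\sB'\cap\sA$ already obtained yields all off-diagonal matrix units between the $j$- and $j'$-summands inside a single block $i$. Hence $C^*(\nor{\sA}(\sB))$ contains every $\mn{n_i}$, i.e.\ equals $\sA$.

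\textbf{Main obstacle.} The delicate step is the rank-preservation argument in ($\Rightarrow$). It needs three things: that the polar part $v$ of a normalizer is still a normalizer; that $v^*v\in\sB$ so that a minimal $e\le v^*v$ exists; and that $\operatorname{Ad}v$ carries minimal projections of $\sB$ to minimal projections. I would handle the last point by applying the same argument to $v^*$ and $vev^*$, using that $v^*(vev^*)v=e$.
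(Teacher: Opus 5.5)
This statement is not proved in the paper. It is quoted from \cite[Theorem 5.11]{BGK2026}, so there is no in-paper argument to compare yours against, and your proposal has to stand on its own. It does: both directions are correct.

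In $(\Rightarrow)$, the rank bookkeeping works as you describe. For a minimal projection $e\le v^*v$ of $\sB$ under $q_{j'}$, the projection $vev^*$ is minimal in $\sB$ and has block ranks $\vec\Lambda_{j'}$. Writing $vq_{j'}=\sum_e (vev^*)v$ then gives $q_jvq_{j'}=0$ whenever columns $j$ and $j'$ differ. The set $\mathcal D$ is a $*$-algebra containing $\sB$ because column equality is an equivalence relation and $\sum_j q_j=1$.

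In $(\Leftarrow)$, your intertwiners $v$ for identical columns $j,j'$ are normalizers. You do not need $w$ to be a partial isometry; any $w\in\mathbb{M}_{d_j\times d_{j'}}$ works. Multiplying $v$ by the localized elements $\mathbb{I}_{d_j}\otimes E_{st}$ of $\sB'\cap\sA$ from Lemma~\ref{lem:rel_com} produces every off-diagonal piece of each $\mn{n_i}$.

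Two justifications should be tightened.
\begin{itemize}
\item Being ``a limit of polynomials in $n$ and $n^*n$'' does not by itself make an element a normalizer, since normalizers are not closed under sums. The correct reason is that $v=n\,f(n^*n)$ with $f(n^*n)\in\sB$, because $n^*n=n^*1n\in\sB$. Combine this with $\nor{\sA}(\sB)\,\sB\subseteq\nor{\sA}(\sB)$.
\item The claim that $q_jxq_{j'}\neq 0$ only if $j,j'\in S_i$ for some $i$ has nothing to do with commutativity of $\sB$. It holds because $x$ is block diagonal and $q_j$ has a nonzero $i$-th component exactly when $j\in S_i$.
\end{itemize}

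Your two directions together also show more than the stated equivalence. The inclusion $\nor{\sA}(\sB)\subseteq\mathcal D$ and your explicit generators give $C^*(\nor{\sA}(\sB))=\mathcal D$ for every $\Lambda$, regular or not. This identifies the algebra generated by the normalizers exactly. It also makes clear that condition (i) of Definition~\ref{def:regular_mat} is redundant once (ii) holds.
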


    \section{Existence and Uniqueness of Conditional Expectations}
    \label{sec:cond_exp}

In this section, we study conditional expectations for inclusions of finite-dimensional $C^*$-algebras. Let $\sB \subseteq \sA$ be a unital inclusion of $C^*$-algebras, not necessarily finite-dimensional. A map $E:\sA \to \sB$ is said to be a conditional expectation if it is a linear, unital, completely positive map satisfying $E(b)=b$ for all $b\in \sB$. It is well known that every such map satisfies the following bimodule property
\[
E(b_1xb_2)=b_1E(x)b_2,
\]
for all $b_1,b_2 \in \sB$ and $x\in \sA$. $E$ is said to be \emph{faithful} if for every $x\neq 0,$ $E(x^*x)>0$. First, we observe in the following lemma that for a finite-dimensional inclusion of $C^*$-algebras, a conditional expectation always exists.

\begin{lem}\label{lem:faithful_exp}
 There exists a faithful conditional expectation from $\mn{\vec{n}}$ onto any of its unital $C^*$-subalgebras. 
\end{lem}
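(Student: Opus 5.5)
The plan is to use a faithful tracial state on $\mn{\vec{n}}$ and take the trace-preserving conditional expectation, obtained as an orthogonal projection in the GNS Hilbert space. Fix positive weights and set $\tau(x)=\sum_{i=1}^l \frac{1}{l\,n_i}\mathrm{Tr}(x_i)$ for $x=(x_1,\dots,x_l)\in\mn{\vec{n}}$. This is a faithful tracial state, so $\langle x,y\rangle:=\tau(y^*x)$ makes $\mn{\vec{n}}$ a finite dimensional Hilbert space. Let $\sB$ be a unital $C^*$-subalgebra. It is a closed subspace, and we let $E$ be the orthogonal projection onto $\sB$. Clearly $E$ is linear and $E(b)=b$ for $b\in\sB$, so in particular $E(1)=1$.

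Next I verify the bimodule property. For $b_1,b_2,c\in\sB$ and $x\in\mn{\vec{n}}$, we have $\tau(c^*b_1xb_2)=\tau((b_1^*cb_2^*)^*x)$ by the trace property. Since $b_1^*cb_2^*\in\sB$, this equals $\tau((b_1^*cb_2^*)^*E(x))=\tau(c^*b_1E(x)b_2)$. Hence $E(b_1xb_2)-b_1E(x)b_2\in\sB$ is orthogonal to $\sB$, and so
\[
E(b_1xb_2)=b_1E(x)b_2 .
\]

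For positivity, take $x\ge 0$ and put $p=E(x)$. For $b\in\sB$ we get $\tau(b^*pb)=\tau(pbb^*)=\tau(xbb^*)=\tau(b^*xb)\ge 0$. Taking $b$ ranging over $\sB$ (in particular spectral projections of the self-adjoint part), I deduce $p\ge 0$. Self-adjointness of $p$ follows from $E(x^*)=E(x)^*$, which holds because $\tau(c^*x^*)=\overline{\tau(x c)}$ shows the projection commutes with the adjoint. Then for a spectral projection $q\in\sB$ of $p$ onto its negative part, $\tau(qpq)\ge0$ forces that part to vanish.

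For complete positivity, I would repeat the same argument for $E\otimes \mathrm{id}_{\mn{m}}$. This map is exactly the $\tau\otimes\mathrm{tr}$-preserving projection onto $\sB\otimes\mn{m}\subseteq\mn{\vec{n}}\otimes\mn{m}$, so the positivity argument applies verbatim. Alternatively, I would cite Tomiyama's theorem that a norm-one projection is completely positive; the norm-one property here is an extra step I would rather avoid.

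Faithfulness follows since $\tau(E(x^*x))=\tau(x^*x)>0$ for $x\neq0$, so $E(x^*x)\neq0$, and it is positive. The main obstacle is positivity (and complete positivity). The key point is that the trace lets one move $b$ around, and the approach hinges on using spectral projections of $E(x)$, which lie in $\sB$.
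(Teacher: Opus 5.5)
Your proof is correct and follows the same route as the paper: fix a faithful tracial state $\tau$ on $\mn{\vec{n}}$ and take the $\tau$-preserving conditional expectation onto $\sB$. The paper simply cites \cite[Proposition V.2.36]{takesaki79} for the existence of that expectation, whereas you construct it explicitly as the orthogonal projection in $L^2(\mn{\vec{n}},\tau)$ and verify the bimodule property, positivity, complete positivity (via $\tau\otimes\mathrm{tr}$ on $\sB\otimes\mathbb{M}_m$) and faithfulness directly; all of these steps are valid.
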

\begin{proof}
    It is easy to see there exists a faithful tracial state on $\mn{\vec{n}}$: indeed consider,
    \begin{align*}
        \tau:\mn{\vec{n}} &\to \C \\   
        (a_1, \dots, a_l)&\mapsto \sum_{i} \omega_i\, \textrm{Tr}(a_i),
    \end{align*}
    for any $\omega_i>0$ such that $\sum_i \omega_i = 1.$ Now if $\sB\subseteq \mn{\vec{n}}$ be any unital $C^*$-subalgebra then by \cite[Proposition V.2.36]{takesaki79}, there exists a faithful conditional expectation $E:\mn{\vec{n}} \to \sB$ such that $\tau \circ E = \tau$.
\end{proof}

\begin{remark}
    We point out that, although the preceding lemma guarantees the existence of \emph{faithful} conditional expectations, a conditional expectation need not be faithful in general. More broadly, even for finite-dimensional unital inclusions, a conditional expectation need not have finite Watatani index; see \cite{watatani1990}. For example, consider the conditional expectation
    \[
        E \colon \mn{2} \ni x \longmapsto x_{11}\otimes \mathbb{I}_2 \in \mathbb{C}\otimes \mathbb{I}_2.
    \]
    It is readily seen that $E$ does not have finite Watatani index. In fact, for a finite-dimensional unital inclusion $\sB \subseteq \sA$, a conditional expectation $E\colon \sA\to\sB$ has finite Watatani index if and only if $E$ is faithful.
\end{remark}

\begin{remark}
    The previous lemma only establishes the existence of a faithful conditional expectation. In general, an inclusion of (finite-dimensional) $C^*$-algebras may admit more than one faithful conditional expectation. Indeed, let $\tau:\mn{\vec{n}}\to \C$ be any faithful tracial state where $\vec{n} \ne 1$. Then there exists a faithful conditional expectation $E_\tau:\mn{\vec{n}} \to \C$ (where $\C \subseteq \mn{\vec{n}}$ has inclusion matrix $\Lambda_{l \times 1} = \vec{n}$) given by
    \[
    E_\tau(a):= \bigoplus_{i = 1}^l \left( \tau(a) \otimes \mathbb{I}_{n_i} \right), \qquad a \in \mn{\vec{n}}.
    \]
    Moreover, the correspondence $\tau \mapsto E_\tau$ is injective. Consequently, distinct faithful tracial states on $\mn{\vec{n}}$ give rise to distinct faithful conditional expectations from $\mn{\vec{n}}$ onto $\C$.
\end{remark}

The next result provides a sufficient condition for the uniqueness of the conditional expectation.

\begin{prop}[{\cite[Corollary 1.4.3]{watatani1990}}]\label{prop:wata_exp}
    Let $\sB \subseteq^E \sA$ be a unital inclusion of $C^*$-algebras satisfying $\sB'\cap \sA \subseteq \sB$. Then $E$ is the only conditional expectation from $\sA$ onto $\sB$.
\end{prop}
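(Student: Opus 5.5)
The plan is to show directly that any conditional expectation $F\colon\sA\to\sB$ coincides with $E$, using the bimodule property together with the hypothesis $\sB'\cap\sA\subseteq\sB$. Since the statement is quoted from Watatani, the cleanest route is to work with the Watatani quasi-basis when $E$ has finite index. In general, though, I would instead use the following elementary argument, which at least handles the finite-dimensional case needed in this paper.

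\textbf{Step 1: reduce to $\ker E$.} First observe that $x - E(x)\in\ker E$ for every $x\in\sA$, and that $F(E(x)) = E(x)$ because $E(x)\in\sB$. Hence it suffices to prove that $F$ vanishes on $\ker E$.

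\textbf{Step 2: the finite-dimensional case.} Here $\sB$ is spanned by its unitaries, so I would average over the compact unitary group $U(\sB)$ with Haar measure $\mu$ and set
\[
P(x)=\int_{U(\sB)} u x u^*\,d\mu(u).
\]
Then $P(x)\in\sB'\cap\sA\subseteq\sB$. By the bimodule property,
\[
F(P(x))=\int_{U(\sB)} u F(x) u^*\,d\mu(u),
\]
and this equals $P(F(x))$. Since $P(x)\in\sB$, we also have $F(P(x))=P(x)$, so $P(x)=P(F(x))$. The same computation with $E$ gives $P(x)=P(E(x))$. Consequently $P(F(x)-E(x))=0$.

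\textbf{Step 3: upgrade $P(y)=0$ to $y=0$.} The difference $y=F(x)-E(x)$ lies in $\sB$, and $P$ restricted to $\sB$ is the center-valued expectation of $\sB$. This does not by itself force $y=0$, so I would apply the identity to $xb$ in place of $x$. That gives $P\big((F(x)-E(x))b\big)=0$ for all $b\in\sB$. Taking $b=y^*$ yields $P(yy^*)=0$. Because the center-valued trace is faithful, it follows that $y=0$.

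\textbf{Main obstacle.} The general (infinite-dimensional) case is the hard part, since there is no Haar averaging over $U(\sB)$. There I would follow Watatani's argument, which uses a quasi-basis for $E$ to express $F$ through $E$ and elements of the relative commutant. Since the statement is cited, it suffices to invoke \cite[Corollary 1.4.3]{watatani1990} for that case.
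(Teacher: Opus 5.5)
Your finite-dimensional argument is correct and complete. It takes a genuinely different route from the paper, which gives no proof of its own and only cites Watatani.

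Watatani's argument fixes a quasi-basis $\{(u_i,v_i)\}$ for $E$. Writing $x=\sum_i E(xu_i)v_i$, any conditional expectation $F$ satisfies $F(x)=E(xh)$ with $h=\sum_i u_iF(v_i)\in\sB'\cap\sA$. The hypothesis then puts $h\in\sB$, so $F(x)=E(x)h$, and $h=F(1)=1$. This route works for arbitrary $C^*$-inclusions. It also describes every expectation through a relative-commutant element, which is the same mechanism the paper uses in the converse of Proposition~\ref{prop:rel_com_iff_uniq_cond}. However, it needs the given $E$ to have finite index; in the paper this comes from choosing $E$ faithful via Lemma~\ref{lem:faithful_exp}.

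Your Haar average over the compact group $U(\sB)$ instead gives $P\circ F=P$ for \emph{every} conditional expectation $F$. Replacing $x$ by $xy^*$ and using faithfulness of the centre-valued trace then finishes the proof. This needs only $\dim\sB<\infty$: the integral is a norm-convergent Bochner integral in any $\sA$. It needs no faithfulness or finite-index assumption on $E$ or $F$, and it covers every use of the proposition in this paper. (Your Step~1 is never used.)

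One correction to your last paragraph. Watatani's Corollary~1.4.3, and the quasi-basis argument you mention, require $E$ to be of index-finite type. So they do not settle the statement for arbitrary infinite-dimensional inclusions, and without that hypothesis the displayed statement is false.

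For instance, $\sB=L^\infty(\mathbb{T})\subseteq\sA=B(L^2(\mathbb{T}))$ is maximal abelian, so $\sB'\cap\sA=\sB$. Let $E_0$ be any conditional expectation onto $\sB$ (one exists by injectivity). Let $P_+$ be the Riesz projection onto $H^2$ and $P_-=1-P_+$. Define $\Phi(x)=\mathrm{LIM}_n\,\bar z^{n}P_+xP_+z^{n}$ and $\Phi'(x)=\mathrm{LIM}_n\,z^{n}P_-xP_-\bar z^{n}$, as Banach limits in the weak operator topology. Both fix $\sB$, since $\bar z^{n}P_+z^{n}$ and $z^{n}P_-\bar z^{n}$ increase to $1$. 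Hence $E_0\circ\Phi$ and $E_0\circ\Phi'$ are conditional expectations onto $\sB$, yet they take the values $1$ and $0$ at $P_+$.

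So the general case should be stated, and cited, with a finite-index hypothesis on $E$. For this paper, your finite-dimensional proof is all that is needed.
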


\begin{lem}\label{lem:uniq_imply_faithful}
   Let $\sB \subseteq^E \sA$ be a unital inclusion of finite-dimensional $C^*$-algebras. If $E$ is the unique conditional expectation from $\sA$ onto $\sB$, then $E$ is faithful.
\end{lem}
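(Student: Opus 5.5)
The plan is to compare $E$ with a faithful conditional expectation and use convexity. By Lemma \ref{lem:faithful_exp} there is a faithful conditional expectation $F\colon \sA\to\sB$; here we use that $\sA$ is finite-dimensional, so $\sA\cong\mn{\vec{n}}$ and $\sB$ is a unital $C^*$-subalgebra of it. Uniqueness of $E$ then forces $E=F$, and so $E$ is faithful. This is essentially the whole argument.

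For completeness I would check that Lemma \ref{lem:faithful_exp} really applies to an abstract finite-dimensional inclusion. Every finite-dimensional $C^*$-algebra is $*$-isomorphic to some $\mn{\vec{n}}$. Transporting $\sB$ along such an isomorphism $\pi$ gives a unital $C^*$-subalgebra $\pi(\sB)\subseteq \mn{\vec{n}}$. The lemma then yields a faithful conditional expectation $F_0\colon \mn{\vec{n}}\to\pi(\sB)$. Setting $F:=\pi^{-1}\circ F_0\circ \pi$ gives a map $\sA\to\sB$ that is linear, unital, completely positive, fixes $\sB$ pointwise, and is faithful. The reason is that $*$-isomorphisms preserve positivity, complete positivity, and the condition $x\neq 0$. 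So $F$ is a faithful conditional expectation from $\sA$ onto $\sB$.

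The only step needing any care is confirming that ``unique conditional expectation'' is taken in the same class of maps as the one $F$ belongs to. The paper defines conditional expectations as unital completely positive idempotent maps onto $\sB$ fixing $\sB$, and $F$ is such a map. So $F=E$ by hypothesis, and faithfulness of $F$ transfers to $E$. I do not anticipate any genuine obstacle: the content of the statement is entirely carried by the existence part, namely the trace-preserving expectation of Lemma \ref{lem:faithful_exp}.
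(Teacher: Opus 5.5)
Your proposal is correct and follows the paper's own argument: Lemma \ref{lem:faithful_exp} supplies a faithful conditional expectation $F$, and uniqueness forces $E=F$. Your additional check that Lemma \ref{lem:faithful_exp} transfers to an abstract finite-dimensional inclusion via a $*$-isomorphism $\sA\cong\mn{\vec{n}}$ is a harmless clarification that the paper leaves implicit, and your opening mention of convexity is never actually used.
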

\begin{proof}
 Since $\sA$ is a finite-dimensional $C^*$-algebra, Lemma \ref{lem:faithful_exp} guarantees the existence of a faithful conditional expectation, say $F$. By the uniqueness of the conditional expectation, we have $E=F$. Hence, $E$ is faithful.
\end{proof}

We now characterize the finite-dimensional inclusions of $C^*$-algebras for which the conditional expectation is unique. After posting the first draft of this article, Professor Vrej Zarikian kindly pointed out to us that the following proposition—and, consequently, the implication `(v) $\iff$ (vii)' in Theorem \ref{thm:char_gen_cartan}—was also proved in his recent work \cite[Theorem 2.4-((v) $\iff$ (vi))]{zarikian2026}.

\begin{prop}\label{lem:rel_com_iff_uniq_exp}\label{prop:rel_com_iff_uniq_cond}
    Let $\sB\subseteq\sA$ be a unital inclusion of finite-dimensional $C^*$-algebras. The following conditions are equivalent: 
    \begin{itemize}
        \item[(i)] $\sB' \cap \sA \subseteq \sB$.
        \item[(ii)] there is a unique conditional expectation from $\sA$ onto $\sB$.
    \end{itemize}
\end{prop}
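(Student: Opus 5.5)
The direction (i) $\Rightarrow$ (ii) comes straight from existing results. Lemma \ref{lem:faithful_exp} supplies a (faithful) conditional expectation $E\colon\sA\to\sB$. Since $\sB'\cap\sA\subseteq\sB$, Proposition \ref{prop:wata_exp} then says $E$ is the only conditional expectation from $\sA$ onto $\sB$.

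For (ii) $\Rightarrow$ (i) I argue by contraposition. Suppose $\sB'\cap\sA\not\subseteq\sB$; the aim is to produce two distinct conditional expectations. Fix a faithful conditional expectation $E$ from Lemma \ref{lem:faithful_exp}. Choose a positive invertible element $h\in\sB'\cap\sA$ that is not in $\sB$. Such an $h$ exists because $\sB'\cap\sA$ is a finite-dimensional $C^*$-algebra strictly larger than $\sZ(\sB)=\sB'\cap\sB$, so it is spanned by its self-adjoint elements. Take a self-adjoint element not in $\sB$ and add a large multiple of $1$.

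The candidate second expectation is
\[
F(x):=E(h)^{-1/2}\,E(h^{1/2}xh^{1/2})\,E(h)^{-1/2},\qquad x\in\sA.
\]
Here $E(h)$ is a positive invertible element of $\sB$; it is invertible because $E$ is faithful and $h\ge c1$ for some $c>0$. It is central in $\sB$, because for $b\in\sB$ the bimodule property gives $bE(h)=E(bh)=E(hb)=E(h)b$. The map $F$ is completely positive. It is unital since $F(1)=E(h)^{-1/2}E(h)E(h)^{-1/2}=1$. For $b\in\sB$, using that $h^{1/2}$ commutes with $b$, we get $E(h^{1/2}bh^{1/2})=E(hb)=E(h)b$, and since $E(h)$ is central, $F(b)=b$. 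So $F$ is a conditional expectation onto $\sB$.

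The main obstacle is showing $F\neq E$ for a suitable $h$. A direct check may be messy, so I would first try to pass to a concrete model. By Proposition \ref{prop:subalg_of_fin_dim_alg}, assume $\sB=\sB(\vec d,\Lambda)$. The failure of $\sB'\cap\sA\subseteq\sB$ then means some summand $\mn{d_j}$ sits in some block $\mn{n_i}$ with multiplicity $\Lambda_{ij}\ge2$. Indeed, the relative commutant contains $\mathbb{I}_{d_j}\otimes\mn{\Lambda_{ij}}$ inside that block, and this is not contained in $\sB$ exactly when $\Lambda_{ij}\ge2$. One also has to rule out the case where all $\Lambda_{ij}\le1$ yet the commutant is still too big: there the commutant is just the diagonal scalars on each block, which lie in $\sB$.

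In this model, write the corner of $\mn{n_i}$ corresponding to $\mn{d_j}\otimes\mathbb{I}_{\Lambda_{ij}}$ as $\mn{d_j}\otimes\mn{\Lambda_{ij}}$. Define $E$ and $F$ to be equal everywhere except on this corner. On the corner, compress by $\mathrm{id}\otimes\varphi$ and by $\mathrm{id}\otimes\psi$, where $\varphi$ and $\psi$ are two different faithful states on $\mn{\Lambda_{ij}}$, for instance the normalized trace and a weighted diagonal state. This is the explicit form of the $h$-twist above with $h=1\otimes\mathrm{diag}(\text{weights})$. Evaluating both maps on $e\otimes e_{11}$ gives different values, so $F\neq E$, which contradicts uniqueness and completes the proof.
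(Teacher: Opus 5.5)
Your abstract set-up is sound and coincides with the paper's: with $h=c_t^2$, $c_t=1+tw_0$, your $F$ is exactly the paper's $E_c$, and your check that $F$ is a conditional expectation is correct.

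\textbf{The gap.} It lies in the step you flag as the main obstacle, $F\neq E$. You claim that failure of $\sB'\cap\sA\subseteq\sB$ forces some $\Lambda_{ij}\ge 2$, and that if all $\Lambda_{ij}\le 1$ the commutant consists of scalars lying in $\sB$. This is false.

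Take $\C\subseteq\C\oplus\C$ embedded diagonally, so $\Lambda=(1,1)^T$. The relative commutant is all of $\C\oplus\C$, which is not contained in $\sB=\C(1,1)$. Indeed $(a,b)\mapsto\lambda a+(1-\lambda)b$, $\lambda\in[0,1]$, are distinct conditional expectations.

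In general, when all $\Lambda_{ij}\le1$ the commutant is $\bigoplus_{\Lambda_{ij}=1}\C\,\mathbb{I}_{d_j}$, with independent scalars on each corner. This lies in $\sB$ only if every column of $\Lambda$ has a single nonzero entry, because $\sB$ must carry the same $x_j$ in every corner of column $j$. This is Case 2 in the proof of Proposition \ref{lem:mult_free_iff_rel_com}.

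In that case your construction, which varies a state on $\mn{\Lambda_{ij}}$, gives nothing: for $\Lambda_{ij}=1$ there is only one state. The non-uniqueness there comes from changing the convex weights with which the different corners of column $j$ contribute to $E(x)_j$. That corresponds to twisting by an $h$ that is $s\,\mathbb{I}_{d_j}$ on one corner of column $j$ and $1$ elsewhere, with $s\neq1$. As written, your (ii)$\Rightarrow$(i) covers only the case where some $\Lambda_{ij}\ge2$.

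\textbf{How to close it.} You can finish without the concrete model. Put $a=E(h)\in\sZ(\sB)$; since $h\in\sB'$, $h$ commutes with $a$. Suppose $F=E$. Then $E(h^{-1})=F(h^{-1})=a^{-1/2}E(1)a^{-1/2}=a^{-1}$. Let $z=h^{-1/2}-h^{1/2}a^{-1}$, which is self-adjoint because the factors commute. The bimodule property gives $E(z^2)=E(h^{-1})-2a^{-1}+E(h)a^{-2}=E(h^{-1})-a^{-1}=0$. Faithfulness of $E$ then forces $z=0$, i.e.\ $h=a\in\sB$, contradicting the choice of $h$.

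The paper reaches the contradiction differently. It assumes $E$ is the unique, hence faithful, expectation, so $E_c=E$ for all small $t$. Comparing coefficients of $t$ then gives $E(w_0^2)=0$ with $w_0=w-E(w)\neq 0$. The argument above works for a single fixed $h$ and avoids both the perturbation in $t$ and any case analysis on $\Lambda$.
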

\begin{proof}
($(i) \implies (ii)$). By Lemma \ref{lem:faithful_exp}, there exists a faithful conditional expectation $E:\sA\to\sB$. Since $\sB'\cap \sA\subseteq \sB$, Proposition \ref{prop:wata_exp} implies that $E$ is the unique conditional expectation.

\noindent($(ii) \implies (i)$). Suppose $\sB\subseteq \sA$ has the unique conditional expectation (say) $E$. First note that, $E(\sB^{'}\cap \sA)\subseteq \sZ(\sB)$. Indeed for any $x\in \sB^{'}\cap \sA$ and $b\in B$, we have 
\[
E(x)b= E(xb)= E(bx)=bE(x),
\]
i.e., $E(x)\in \sZ(\sB)$ for all $x\in \sB^{'}\cap \sA$. Now on contrary assume that $\sB' \cap \sA \nsubseteq \sB$. This implies that $\sZ(\sB) \subsetneq \sB^{'}\cap \sA$. Therefore there exists a self-adjoint element $w \in \sB^{'}\cap \sA$ such that $w\notin \sZ (\sB)$. Take $w_{0}:= w- E(w)$. Clearly $w_0$ is a non-zero, self-adjoint element such that $E(w_0)=0$. Also note that $w_0\in \sB^{'}\cap \sA$. For small $t\in \mathbb{R}$, define $c_{t}:= 1_{\sA}+tw_{0} \in \sB^{'}\cap \sA$. Note that for $t$ close enough to $0$, we have $c_t>0$. Since $E$ is faithful then $E(c^{2}_{t})$ is positive, invertible element in the $\sZ(\sB)$. Let $z_{t}:= E(c^{2}_{t})^{-1/2}\in \sZ(\sB)$ and define $c:= c_{t}z_{t}\in \sB^{'}\cap \sA$. Since $c_{t}, z_{t}$ commutes, we get $c^{2}=c^{2}_{t}z^{2}_{t}$. Observe that, 
\[
E(c^2)= E(c^{2}_{t}z^{2}_{t})= E(c^{2}_{t})z^{2}_{t}=1.
\]
Thus using this $c$ define 
\begin{align*}
    E_{c}: \sA &\to \sB\\
    E_{c}(x)&:= E(cxc). 
\end{align*}
Cleary, $E_c$ is a well-defined linear map. Also it is easy to see that $E_c$ is unital, positive and  $\sB$-bimodule map. Hence it is a conditional expectation from $\sA$ onto $\sB$. Then by uniqueness of the conditional expectation, we have $E_{c}(x)= E(x)$ for all $x\in \sA$. Then,
\[
E(x)= E_{c}(x)= E(c_{t}z_{t}xc_{t}z_{t})= z^{2}_{t}E(c_{t}xc_{t})= E(c^{2}_{t})^{-1}E(c_{t}xc_{t}).
\]
This implies that $E(c_{t}xc_{t})= E(c^{2}_{t})E(x)$ for all $x\in \sA$. Recall, $c^{2}_{t}= 1_{\sA}+ 2t w_{0}+ t^{2}w_{0}$. Then 
\begin{equation}\label{eq:equality}
E(c^{2}_{t})E(x)= (1_{\sA}+t^{2}E(w^{2}_{0}))E(x)= E(x)+ t^{2}E(w^{2}_{0})E(x).    
\end{equation}
Also,
\begin{align*}
    E(c_{t}xc_{t})&= E \left((1_{\sA}+tw_{0})x (1_{\sA}+tw_{0})\right) \\
    &= E\left(x+ t(w_0x+xw_0)+t^{2}w_0xw_0 \right)\\
    &= E(x)+tE(w_0x+xw_0)+t^{2}E(w_{0}xw_0)
\end{align*}    
Equating the coefficient of $t$ in equation \ref{eq:equality} and this last equation, we get $E(w_0x+xw_0)=0$ for all $x\in \sA$. Choose $x=w_0$, then $0=E(2w^{2}_{0})= E(w^{2}_{0})=E(w^{*}_{0}w_0)$ and since $E$ is the unique conditional expectation, by Lemma \ref{lem:uniq_imply_faithful}, $E$ is faithful. This implies that $w_0=0$, which is a contradiction since $w_0\neq 0$. This implies that $\sB^{'}\cap \sA \subseteq \sB$.

This completes the proof!
\end{proof}

    \section{Multiplicity Free Subalgebras}
    \label{sec:mult_free}
    
In this section, we introduce the notion of a \emph{multiplicity free} matrix for a rectangular matrix with non-negative integer entries. Using this notion, we define multiplicity free subalgebras and multiplicity free embeddings of finite-dimensional $C^*$-algebras. We conclude this section by characterizing the multiplicity free subalgebras of a given finite-dimensional unital $C^*$-algebra.

\begin{definition}[Multiplicity Free Matrix]\label{def:multiplicity_free_mat}
    Let $\Lambda$ be an $l \times k$ matrix with entries in $\N \cup \{0\}$. We say that $\Lambda$ is \emph{multiplicity free} if
    \[
        \sum_{i=1}^{l} \Lambda_{ij} \leq 1
    \]
    for every $j=1,\dots,k$.
\end{definition}
\begin{remark}\label{rem:multiplicity_implies_reg}
 It can be easily seen that all the nonzero entries in each row of a multiplicity free matrix are equal to $1$, and the corresponding columns are identical. Hence, all the multiplicity free matrices are regular matrices (see Definition \ref{def:regular_mat}).
   \end{remark}

The following proposition shows that, for a square matrix, the multiplicity free matrices are precisely the permutation matrices.

\begin{prop}\label{prop:mult_iff_permutation}
    Let $\Lambda$ be a $k \times k$ matrix with entries in $\N \cup \{0\}$ such that no row or column is identically zero. Then the following are equivalent:
    \begin{itemize}
        \item[(i)] $\Lambda$ is multiplicity free.
        \item[(ii)] $\Lambda$ is a permutation matrix.
        \item[(iii)] There exists $\vec{d} \in \N^k$ such that $\Lambda \vec{d} = \vec{d}$.
    \end{itemize}
\end{prop}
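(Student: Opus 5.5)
I would prove the cycle (i) $\Rightarrow$ (ii) $\Rightarrow$ (iii) $\Rightarrow$ (i), using only counting arguments and the standing hypothesis that no row or column of $\Lambda$ is zero.

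\textbf{(i) $\Rightarrow$ (ii).} Since every column of $\Lambda$ is nonzero and has integer entries summing to at most $1$, each column contains exactly one entry equal to $1$ and all its other entries are $0$. Hence the total sum of the entries of $\Lambda$ is exactly $k$. Every row is also nonzero, so each of the $k$ rows has entry sum at least $1$. As these $k$ row sums add up to $k$, each row sum is exactly $1$, so each row contains a single $1$. A $0$--$1$ matrix with exactly one $1$ in each row and each column is a permutation matrix.

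\textbf{(ii) $\Rightarrow$ (iii).} Take $\vec{d}=(1,\dots,1)^T$. For a permutation matrix $\Lambda=P_\sigma$ we have $(\Lambda\vec{d})_i=d_{\sigma(i)}=1$ for every $i$, so $\Lambda\vec{d}=\vec{d}$. More generally, any $\vec{d}$ that is constant on the cycles of $\sigma$ works.

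\textbf{(iii) $\Rightarrow$ (i).} This is the only step that needs an idea. Suppose $\vec{d}\in\N^k$ satisfies $\Lambda\vec{d}=\vec{d}$, and sum all coordinates of both sides:
\[
\sum_{j=1}^k\Big(\sum_{i=1}^k\Lambda_{ij}\Big)d_j=\sum_{j=1}^k d_j .
\]
Let $c_j:=\sum_i\Lambda_{ij}$ be the $j$th column sum. Because column $j$ is nonzero and has integer entries, $c_j\geq 1$. The identity above then reads $\sum_j (c_j-1)d_j=0$. Each term is non-negative, and each $d_j>0$, so every $c_j=1$. Thus $\sum_i\Lambda_{ij}\le 1$ for all $j$, which is (i).

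The main obstacle is this last implication: one has to choose the right functional to apply, namely summing coordinates (pairing with the all-ones vector), and then use both the strict positivity of $\vec{d}$ and the nonvanishing of the columns of $\Lambda$. The remaining steps are routine bookkeeping.
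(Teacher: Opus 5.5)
Your proof is correct and follows essentially the same route as the paper. The paper uses the same counting argument for (i)$\Rightarrow$(ii) (column sums $1$, total $k$, row sums forced to $1$), takes (ii)$\Rightarrow$(iii) as immediate, and for (iii)$\Rightarrow$(i) sums the coordinates of $\Lambda\vec{d}=\vec{d}$ to get $\sum_j (c_j-1)d_j=0$.
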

\begin{proof}
     First observe that the implication $(ii) \implies (i)$ is immediate. Likewise, the implication $(ii) \implies (iii)$ is straightforward.
     
    \noindent ($(i) \implies (ii)$). Since no column is entirely zero, (i) implies the sum of every column is exactly 1. Because the entries are non-negative integers, this means every column contains exactly one $1$ and the rest $0$s.

    Since there are $k$ columns, and each column sums to exactly $1$, the sum of all elements in the entire matrix is $k$:
    $$\text{Total Sum} = \sum_{j=1}^k \left( \sum_{i=1}^{k} \Lambda_{ij} \right) = \sum_{j=1}^k 1 = k.$$

    Let $r_i$ be the sum of the entries in the $i$-th row. We are given that no row is entirely zero. Therefore, each row sum must be at least 1:$$r_i \geq 1 \quad \text{for all } i \in \{1, \dots, k\}.$$
    We also know that summing all the row sums must equal the total sum of the matrix, which we found is $k$:$$\sum_{i=1}^k r_i = k.$$
    We are summing $k$ positive integers ($r_1, r_2, \dots, r_k$), each of which is $\geq 1$, and their total is exactly $k$. This is possible if every single row sum is exactly 1, i.e. $$r_i = 1 \quad \text{for all } i \in \{1, \dots, k\}.$$
    Since all the entries are in $\mathbb{N} \cup \{0\}$, every row and every column contains exactly one $1$ and $(k-1)$ $0$s. This is the exact definition of a permutation matrix.

    \noindent ($(iii) \implies (i)$). Assume there exists $\vec{d} \in \mathbb{N}^k$ such that $\Lambda \vec{d} = \vec{d}$. Therefore we get for all $i,\, d_i = \sum_{j=1}^k \Lambda_{ij} d_j$.
    Summing all $k$ components gives:
    \begin{equation}\label{eq:component_sum}
     \sum_{i=1}^k d_i = \sum_{i=1}^k \left( \sum_{j=1}^k \Lambda_{ij} d_j \right)=\sum_{j=1}^k d_j \left( \sum_{i=1}^k \Lambda_{ij} \right).   
    \end{equation}
    Let $c_j = \sum_{i=1}^k \Lambda_{ij}$ be the sum of the $j$-th column. We are given that no column is entirely zero, and all entries are in $\mathbb{N} \cup \{0\}$. Therefore, every column sum must be at least 1 (i.e., $c_j \ge 1$). Substituting $c_j$ back into equation \ref{eq:component_sum}, we get
    $$\sum_{j=1}^k d_j (c_j - 1) = 0.$$
    This implies that $c_j = 1$ for all $j = 1, \dots, k$. Thus the sum of every column is exactly 1, we trivially have, $\sum_{i=1}^k \Lambda_{ij} \leq 1 \,\,\, \text{for all } j$, that is, $\Lambda$ is multiplicity free.
\end{proof}

\begin{prop}\label{prop:mul_embed}
    Let $\vec{d} \in \N^k$ and $\vec{n} \in \mathbb{N}^l$ be fixed vectors. Let $S_d = \sum_{j=1}^k d_j$ and $S_n = \sum_{i=1}^l n_i$. Suppose there exists an $l \times k$ matrix $\Lambda$ with entries in $\mathbb{N} \cup \{0\}$, having no identically zero columns, such that $\Lambda \vec{d} = \vec{n}$.
    \begin{enumerate}
    \item[(i)] If $\Lambda$ is multiplicity free, then $S_d = S_n$.
    \item[(ii)] If $\Lambda$ is not multiplicity free, then $S_d < S_n$.
\end{enumerate}
Consequently, for a given fixed pair of vectors $(\vec{d}, \vec{n})$, it is impossible to find two such matrices $\Lambda_1$ and $\Lambda_2$ where one is multiplicity free and the other is not.
\end{prop}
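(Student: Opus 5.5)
The plan is to sum the identity $\Lambda\vec{d}=\vec{n}$ over its components, exactly as in the proof of $(iii)\implies(i)$ in Proposition \ref{prop:mult_iff_permutation}. Writing $c_j:=\sum_{i=1}^l \Lambda_{ij}$ for the $j$-th column sum, we get
\[
S_n=\sum_{i=1}^l n_i=\sum_{i=1}^l\sum_{j=1}^k \Lambda_{ij}d_j=\sum_{j=1}^k c_j d_j .
\]
Since $\Lambda$ has non-negative integer entries and no identically zero column, $c_j\geq 1$ for every $j$. Subtracting $S_d=\sum_j d_j$ then gives
\[
S_n-S_d=\sum_{j=1}^k (c_j-1)d_j ,
\]
which is a sum of non-negative terms because each $d_j\in\N$.

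For (i), if $\Lambda$ is multiplicity free then $c_j\le 1$, and together with $c_j\ge1$ this forces $c_j=1$ for all $j$. Every term of the sum vanishes, so $S_d=S_n$.

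For (ii), if $\Lambda$ is not multiplicity free then some column $j_0$ has $c_{j_0}\ge 2$. Since $d_{j_0}\geq 1$, the corresponding term satisfies $(c_{j_0}-1)d_{j_0}\geq 1$, while all other terms are $\geq 0$. Hence $S_n-S_d\ge1$, that is, $S_d<S_n$.

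The final assertion follows at once. For a fixed pair $(\vec{d},\vec{n})$ the quantity $S_n-S_d$ is independent of the matrix. If $\Lambda_1$ were multiplicity free and $\Lambda_2$ were not, part (i) would give $S_n=S_d$ and part (ii) would give $S_d<S_n$, a contradiction.

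The argument has no real obstacle. The only point needing care is to use the hypothesis of no zero columns to get $c_j\ge1$, and the strict positivity of the $d_j$ to get strictness in (ii).
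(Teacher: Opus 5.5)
Your proposal is correct and matches the paper's proof: both sum the components of $\Lambda\vec{d}=\vec{n}$ to get $S_n=\sum_j c_j d_j$, then use $c_j=1$ for (i) and a column with $c_{j_0}\ge 2$ together with $d_{j_0}\ge 1$ for the strict inequality in (ii). You also spell out the deduction of the final ``consequently'' assertion, which the paper leaves implicit.
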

\begin{proof}
    \noindent (i). Because $\Lambda$ is multiplicity free and has no zero columns, the sum of every single column must be exactly $1$. If we sum all the entries of $\vec{n} = \Lambda \vec{d}$, we get:
    $$S_n = \sum_{i=1}^l n_i = \sum_{i=1}^l \left( \sum_{j=1}^k \Lambda_{ij} d_j \right) = \sum_{j=1}^k d_j \left( \sum_{i=1}^l \Lambda_{ij} \right)= \sum_{j=1}^k d_j (1) = S_d.$$

    \noindent (ii). Because $\Lambda$ is not multiplicity free but still has no zero columns, every column sums to at least $1$, and at least one column (let's say column $x$) sums to $2$ or more. If we do the exact same summation for $\vec{n} = \Lambda \vec{d}$, we get:
    $$S_n = \sum_{j=1}^k d_j \left( \sum_{i=1}^l \Lambda_{ij} \right) \ge \left( \sum_{j \neq x} d_j (1) \right) + d_x (2) = \left( \sum_{j=1}^k d_j \right) + d_x = S_d + d_x>S_d.$$
\end{proof}

It is natural to ask, for a fixed pair of vectors $(\vec{d},\vec{n})\in \N^k \times \N^l$, how many multiplicity free matrices $\Lambda$ satisfy $\Lambda \vec{d}=\vec{n}$.
The following proposition provides a recipe for finding this number.

\begin{prop}\label{prop:num_of_mult_free}
    Let $\vec{d}=(d_1, \dots, d_k) \in \N^k$ and $\vec{n} =(n_1, \dots, n_l)\in \mathbb{N}^l$ be fixed vectors. The number of multiplicity free matrices $\Lambda \in \mathbb{M}_{l\times k}(\mathbb{N} \cup \{0\})$ with no-zero columns satisfying $\Lambda \vec{d} = \vec{n}$ is equal to the coefficient of $x_1^{n_1} x_2^{n_2} \cdots x_l^{n_l}$ in the expansion of the following polynomial,
    $$P_{\vec{n}}^{\vec{d}}(x_1, x_2, \dots, x_l) := \prod_{j=1}^k \left( x_1^{d_j} + x_2^{d_j} + \dots + x_l^{d_j} \right).$$
\end{prop}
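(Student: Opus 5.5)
We set up an explicit bijection between the multiplicity free matrices in question and the monomial terms of the expanded product. Let $\Lambda\in\mathbb{M}_{l\times k}(\N\cup\{0\})$ be multiplicity free with no zero columns. Then each column sum is exactly $1$, so each column $j$ contains exactly one entry equal to $1$, in some row $\sigma(j)\in\{1,\dots,l\}$, and all its other entries are $0$. This gives a bijection between such matrices and the functions $\sigma\colon\{1,\dots,k\}\to\{1,\dots,l\}$, with $\Lambda_{ij}=1$ precisely when $\sigma(j)=i$. In terms of $\sigma$, the condition $\Lambda\vec{d}=\vec{n}$ becomes
\[
\sum_{j\in\sigma^{-1}(i)} d_j = n_i,\qquad i=1,\dots,l.
\]

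Next we expand the product $P_{\vec{n}}^{\vec{d}}$ by distributivity, without collecting like terms. We obtain a sum over all choices of one summand $x_{\sigma(j)}^{d_j}$ from each factor $j=1,\dots,k$, that is, a sum over all functions $\sigma\colon\{1,\dots,k\}\to\{1,\dots,l\}$:
\[
P_{\vec{n}}^{\vec{d}}=\sum_{\sigma}\prod_{j=1}^k x_{\sigma(j)}^{d_j}=\sum_{\sigma}\prod_{i=1}^l x_i^{\sum_{j\in\sigma^{-1}(i)}d_j}.
\]
Each term is a monic monomial. Hence the coefficient of $x_1^{n_1}\cdots x_l^{n_l}$ equals the number of functions $\sigma$ whose exponent vector is $\vec{n}$. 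By the displayed condition above, these are exactly the $\sigma$ corresponding to multiplicity free $\Lambda$ with $\Lambda\vec{d}=\vec{n}$.

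The only point needing care is that distinct $\sigma$ must give distinct matrices and be counted separately, even when they produce the same monomial. This holds because $\sigma$ records which factor contributed which variable, and $\sigma$ determines $\Lambda$ uniquely; so the count is of matrices, not of monomials. We also note that the statement does not demand nonzero rows. If unitality is wanted, then since every $n_i\ge 1$, the equation $\Lambda\vec{d}=\vec{n}$ already forces every row of $\Lambda$ to be nonzero, so nothing further needs to be checked.
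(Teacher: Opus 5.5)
Your proof is correct and follows essentially the same route as the paper: both identify such matrices with assignment functions $f\colon\{1,\dots,k\}\to\{1,\dots,l\}$, and both read off the coefficient of $x_1^{n_1}\cdots x_l^{n_l}$ from the distributive expansion of the product, written before collecting like terms as a sum over these functions. Your extra remark, that $n_i\ge 1$ forces every row of $\Lambda$ to be nonzero, is a correct small addition that the paper does not make explicit.
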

\begin{proof}
    Because $\Lambda$ is multiplicity free and has no identically zero columns, the sum of each column must be exactly 1. Since the entries are non-negative integers, every column $j$ contains exactly one $1$ and $(l-1)$ $0$s.

    This means each matrix $\Lambda$ uniquely defines a function (an assignment) $f: \{1, \dots, k\} \to \{1, \dots, l\}$, where: $f(j) = i$ if and only if $\Lambda_{ij} = 1$.

    The condition that $\Lambda \vec{d} = \vec{n}$ means that for every row $i \in \{1, \dots, l\}$, the dot product of the $i$-th row with $\vec{d}$ equals $n_i$:$$\sum_{j=1}^k \Lambda_{ij} d_j = n_i$$Since $\Lambda_{ij} = 1$ exactly when $f(j) = i$ (and is $0$ otherwise), we can rewrite this sum as:
    \begin{equation}
    \label{eqn:sum_cond}
        \sum_{j \in f^{-1}(i)} d_j = n_i.
    \end{equation}

    Thus, finding a valid matrix $\Lambda$ is strictly equivalent to finding an assignment function $f$ such that the sum of the elements mapped to $i$ equals $n_i$ for all $i$.
    Consider the polynomial $$P_{\vec{n}}^{\vec{d}}(x_1, x_2, \dots, x_l) = \prod_{j=1}^k \left( \sum_{i=1}^l x_i^{d_j} \right).$$
    By the distributive law of algebra, expanding this product requires selecting exactly one term from each of the $k$ factors and multiplying them together.

    A choice of one term from each factor $j$ corresponds perfectly to choosing an assignment function $f(j) = i$. Therefore, we can write the fully expanded (but unsimplified) polynomial as a sum over all possible functions $f$:
    $$P_{\vec{n}}^{\vec{d}}(x_1, \dots, x_l) = \sum_{f} \left( \prod_{j=1}^k x_{f(j)}^{d_j} \right)$$

    For a specific function $f$, let's group the $x$ variables in the product by their subscripts. If multiple items are assigned to the same $x_i$, their exponents are added together:$$\prod_{j=1}^k x_{f(j)}^{d_j} = x_1^{\sum_{j \in f^{-1}(1)} d_j} \cdot x_2^{\sum_{j \in f^{-1}(2)} d_j} \cdots x_l^{\sum_{j \in f^{-1}(l)} d_j}$$Substituting this back into our polynomial expansion gives:
    \begin{equation}
    \label{eqn:poly_sum}
        P_{\vec{n}}^{\vec{d}}(x_1, \dots, x_l) = \sum_{f} \left( \prod_{i=1}^l x_i^{\sum_{j \in f^{-1}(i)} d_j} \right)
    \end{equation}

    As seen above a valid matrix corresponds to a function $f$ where $\sum_{j \in f^{-1}(i)} d_j = n_i$ for every $i$.
    
    When we look at the polynomial sum in (\ref{eqn:poly_sum}), an assignment $f$ will produce the specific algebraic term $x_1^{n_1} x_2^{n_2} \cdots x_l^{n_l}$ if and only if it satisfies that exact sum condition (\ref{eqn:sum_cond}). 

    When we gather like terms to simplify the polynomial, every valid assignment $f$ adds exactly $1$ to the coefficient of $x_1^{n_1} x_2^{n_2} \cdots x_l^{n_l}$. All invalid assignments $f$ produce terms with different exponents and do not contribute to this coefficient.

    Therefore, the final coefficient of $x_1^{n_1} x_2^{n_2} \cdots x_l^{n_l}$ is exactly the number of assignment functions $f$ satisfying the condition, which is precisely the number of multiplicity-free matrices $\Lambda$ mapping $\vec{d}$ to $\vec{n}$
\end{proof}

\begin{theorem}\label{thm:mult_free_invariance}
    Let $\vec{n} \in \mathbb{N}^l$ be fixed. Suppose there exist
    $\vec{d} \in \mathbb{N}^k$, $\Lambda_1 \in \mathbb{M}_{l \times k}(\N \cup \{0\})$,
    $\vec{c} \in \mathbb{N}^r$, and $\Lambda_2 \in \mathbb{M}_{l \times r}(\N \cup \{0\})$
    such that
    \[
        \Lambda_1 \vec{d} = \vec{n} = \Lambda_2 \vec{c}
    \]
    and
    \[
        \Phi_{\Lambda_1}(\mn{\vec{d}})
        =
        \Phi_{\Lambda_2}(\mn{\vec{c}}).
    \]
    If $\Lambda_1$ is multiplicity-free, then $\Lambda_2$ is also
    multiplicity-free.
\end{theorem}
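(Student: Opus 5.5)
The plan is to show that the \emph{same} subalgebra $\sB:=\Phi_{\Lambda_1}(\mn{\vec{d}})=\Phi_{\Lambda_2}(\mn{\vec{c}})$ determines the column sums of its inclusion matrix through an invariant that does not depend on the chosen presentation. Then multiplicity freeness of $\Lambda_1$ will transfer to $\Lambda_2$. The cleanest invariant is the dimension of the relative commutant $\sB'\cap\mn{\vec{n}}$ compared with the dimension of the centre $\sZ(\sB)$.

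First I will compute $\sB(\vec{d},\Lambda)'\cap\mn{\vec{n}}$ for an arbitrary presentation. Inside the $i$-th block $\mn{n_i}$, the image of $x$ is $\bigoplus_j x_j\otimes\mathbb{I}_{\Lambda_{ij}}$. Its commutant in $\mn{n_i}$ is $\bigoplus_{j}\mathbb{I}_{d_j}\otimes\mn{\Lambda_{ij}}$; this is the standard double-commutant and Schur's lemma computation for a direct sum of pairwise inequivalent irreducibles with multiplicities. Summing over $i$ gives
\[
\dim\bigl(\sB'\cap\mn{\vec{n}}\bigr)=\sum_{j=1}^{k}\sum_{i=1}^{l}\Lambda_{ij}^2 .
\]
On the other hand $\dim\sZ(\sB)=k$, which is intrinsic to $\sB$. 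Since $\Lambda$ has nonzero columns, each inner sum $\sum_i\Lambda_{ij}^2$ is at least $1$, with equality for every $j$ exactly when each column has a single entry equal to $1$, i.e.\ when $\Lambda$ is multiplicity free. Hence
\[
\Lambda\ \text{is multiplicity free}\iff \dim\bigl(\sB'\cap\mn{\vec{n}}\bigr)=\dim\sZ(\sB).
\]
Equivalently, this says $\sB'\cap\mn{\vec{n}}=\sZ(\sB)$, because $\sZ(\sB)\subseteq\sB'\cap\mn{\vec{n}}$ always holds.

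Since both sides of this criterion depend only on the set $\sB$ and on $\mn{\vec{n}}$, applying it to $\Lambda_1$ and then to $\Lambda_2$ finishes the proof. Along the way, $\dim\sZ(\sB)=k=r$.

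The main obstacle is the commutant computation in the first step. One must justify carefully that the summands $x_j\otimes\mathbb{I}_{\Lambda_{ij}}$ for distinct $j$ are mutually inequivalent representations of $\mn{\vec{d}}$, so that there are no off-diagonal intertwiners between different $j$. This holds because distinct $j$ correspond to distinct minimal central projections of $\mn{\vec{d}}$. Given that, the block-$j$ commutant of $\mn{d_j}\otimes\mathbb{I}_{p}$ in $\mn{d_jp}$ is $\mathbb{I}_{d_j}\otimes\mn{p}$ by a direct matrix-unit calculation.

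A lighter alternative, which avoids commutants, would use Proposition~\ref{prop:mul_embed}: the trace of a minimal projection of $\sB$ lying in the $j$-th summand equals $\sum_i\Lambda_{ij}$ in the unnormalised trace of $\mn{\vec{n}}$. This trace is again intrinsic to $\sB$ up to relabelling of the summands.
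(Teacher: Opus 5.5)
Your proof is correct, and it takes a genuinely different route from the paper's.

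\textbf{The paper's route.} The paper works with the minimal central projections of the common subalgebra $C$. Since $\sZ(C)$ is the same for both presentations, it gets $k=r$ and $\Phi_{\Lambda_1}(e_j)=\Phi_{\Lambda_2}(f_{\sigma(j)})$ for some permutation $\sigma$. Comparing ranks of these projections in each block $\mn{n_i}$ gives $d_j(\Lambda_1)_{ij}=c_{\sigma(j)}(\Lambda_2)_{i\sigma(j)}$. Comparing the cut-down algebras $E_jCE_j$ gives $d_j=c_{\sigma(j)}$. Hence $\Lambda_2$ is literally $\Lambda_1$ with its columns permuted.

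\textbf{Your route.} You compare the numerical invariant $\dim(\sB'\cap\mn{\vec{n}})=\sum_{i,j}\Lambda_{ij}^2$ with $\dim\sZ(\sB)=k$. Your equality analysis is sound: each nonzero column contributes at least $1$, with equality exactly when it has a single entry $1$. The commutant computation is also right; it is Lemma~\ref{lem:rel_com}.

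\textbf{What each buys.} The paper's argument proves more and needs no commutant computation. It shows the inclusion matrix is determined by the subalgebra up to column relabelling, so any column-permutation-invariant property of $\Lambda$ (regularity, for instance) transfers.

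Yours proves only what is asked, but it is coordinate-free. The criterion $\sB'\cap\mn{\vec{n}}=\sZ(\sB)$ is invariant under every automorphism of $\mn{\vec{n}}$. So it shows at once that multiplicity freeness does not depend on the presentation, nor on the unitary $u$ in Definition~\ref{def:mult_free_subalg}; the paper's remark only treats a fixed $u$. It also already contains the equivalence of Proposition~\ref{lem:mult_free_iff_rel_com} (together with Lemma~\ref{lem:rel_comm_invariance}). You obtain it by a dimension count instead of exhibiting explicit elements of $\sB'\cap\sA$ outside $\sB$.

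\textbf{Your sketched alternative.} It also works and can be shortened further. Injectivity of $\Phi_{\Lambda_1}$ and $\Phi_{\Lambda_2}$ gives $\mn{\vec{d}}\cong\mn{\vec{c}}$, hence $S_d=S_c$. Applying Proposition~\ref{prop:mul_embed} to both matrices then finishes immediately. This is essentially how the paper proves its later theorem on embeddings.
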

\begin{proof}
    Let $C_1 = \Phi_{\Lambda_1}\left(\mathbb{M}_{\vec{d}}(\mathbb{C})\right)$ and $C_2 = \Phi_{\Lambda_2}\left(\mathbb{M}_{\vec{c}}(\mathbb{C})\right)$. By hypothesis, these are the exact same $C^*$-subalgebra of $\mathbb{M}_{\vec{n}}(\mathbb{C})$, so $C_1 = C_2 = C$. Hence,
    $$\sZ(C_1) = \sZ(C_2) = \sZ(C).$$
    The center of the finite-dimensional algebra $\mathbb{M}_{\vec{d}}(\mathbb{C}) = \bigoplus_{j=1}^k \mathbb{M}_{d_j}(\mathbb{C})$ is isomorphic to $\mathbb{C}^k$, and its minimal projections are given by $e_j = (0, \dots, \mathbb{I}_{d_j}, \dots, 0)$ for $j = 1, \dots, k$. Because $\Phi_{\Lambda_1}$ is an injective $*$-homomorphism (an embedding), it maps the center of $\mathbb{M}_{\vec{d}}(\mathbb{C})$ isomorphically onto $\sZ(C_1)$. Thus, the minimal projections of $\sZ(C_1)$ are exactly the images of the minimal projections of $\mathbb{M}_{\vec{d}}(\mathbb{C})$ i.e.,
    $$E_j = \Phi_{\Lambda_1}(e_j), \quad j = 1, \dots, k.$$
    By the exact same reasoning, the minimal projections of $\sZ(C_2)$ are$$F_m = \Phi_{\Lambda_2}(f_m), \quad m = 1, \dots, r,$$where $f_m = (0, \dots, \mathbb{I}_{c_m}, \dots, 0)$ are the minimal central projections of $\mathbb{M}_{\vec{c}}(\mathbb{C})$. Since $\sZ(C_1) = \sZ(C_2)$, they must have the exact same set of minimal projections in $\mathbb{M}_{\vec{n}}(\mathbb{C})$. This implies that $k = r$, and there must exist a permutation $\sigma \in S_k$ such that for all $j \in \{1, \dots, k\}$:$$E_j = F_{\sigma(j)}.$$ 
    Also, the $i$-th component of $E_j$ in $\mathbb{M}_{n_i}(\mathbb{C})$ is,
    $$E_j^{(i)} = \Phi_{\Lambda_1}^{(i)}(e_j) = e_j \otimes \mathbb{I}_{(\Lambda_1)_{ij}}.$$This is a projection in $\mathbb{M}_{n_i}(\mathbb{C})$ of rank $d_j (\Lambda_1)_{ij}$. Similarly, the $i$-th component of $F_{\sigma(j)}$ has rank $c_{\sigma(j)} (\Lambda_2)_{i, \sigma(j)}$. Since $E_j = F_{\sigma(j)}$ exactly as matrices, their ranks in each direct summand must be equal, i.e.
    $$d_j (\Lambda_1)_{ij} = c_{\sigma(j)} (\Lambda_2)_{i, \sigma(j)} \quad \text{for all } i=1, \dots, l.$$
    Furthermore, consider the cut-down algebra $E_j C E_j$. Note that, $E_j C_1 E_j \cong \mathbb{M}_{d_j}(\mathbb{C})$. On the other hand, utilizing $\Phi_{\Lambda_2}$, $F_{\sigma(j)} C_2 F_{\sigma(j)} \cong \mathbb{M}_{c_{\sigma(j)}}(\mathbb{C})$. Because $C_1 = C_2$ and $E_j = F_{\sigma(j)}$, the cut-down algebras are strictly identical, which means $\mathbb{M}_{d_j}(\mathbb{C}) \cong \mathbb{M}_{c_{\sigma(j)}}(\mathbb{C})$. Thus:$$d_j = c_{\sigma(j)}.$$
    Since $d_j \in \mathbb{N}$, We get $$(\Lambda_1)_{ij} = (\Lambda_2)_{i, \sigma(j)} \quad \text{for all } i=1,\dots,l \text{ and } j=1,\dots,k.$$ This implies that the matrix $\Lambda_2$ is exactly the matrix $\Lambda_1$ with its columns permuted by the permutation $\sigma$. We are given that $\Lambda_1$ is multiplicity free, i.e., 
    $$\sum_{i=1}^l (\Lambda_1)_{ij} \leq 1 \quad \text{for all } j=1,\dots,k.$$Let $m \in \{1,\dots,k\}$ be an arbitrary column index for $\Lambda_2$. There exists $j$ such that $m = \sigma(j)$. Summing the entries of the $m$-th column of $\Lambda_2$, we get:$$\sum_{i=1}^l (\Lambda_2)_{im} = \sum_{i=1}^l (\Lambda_2)_{i, \sigma(j)} = \sum_{i=1}^l (\Lambda_1)_{ij} \leq 1.$$Therefore, $\Lambda_2$ is also multiplicity free.
\end{proof}

We now introduce the following two notions associated with multiplicity free matrices.

\begin{definition}[Multiplicity Free Subalgebra]
\label{def:mult_free_subalg}

A unital $C^*$-subalgebra $\sB$ of $\mn{\vec{n}}$ is said to be \emph{multiplicity free} if there exists $u \in \sU(\mn{\vec{n}})$ such that
\[
u\sB u^* = \Phi_{\Lambda}(\mn{\vec{d}})
\]
for some $\vec{d} \in \N^k$ and a multiplicity free $l \times k$ matrix $\Lambda$.
\end{definition}

\begin{remark}
    Note that for a subalgebra $\sB$ of $\mn{\vec{n}}$, if there exists a unitary $u \in \mn{\vec{n}}$ and two matrices $\Lambda_1$ and $\Lambda_2$ such that
\[
\operatorname{im}(\Phi_{\Lambda_1})
= u\sB u^*
= \operatorname{im}(\Phi_{\Lambda_2}),
\]
then, by Theorem \ref{thm:mult_free_invariance}, if either $\Lambda_1$ or $\Lambda_2$ is multiplicity free, then the other is multiplicity free as well.
\end{remark}

\begin{lem}\label{lem:mult-free-is-regular}
Every multiplicity free subalgebra $\sB$ of a unital $C^*$-algebra $\sA$ is regular.    
\end{lem}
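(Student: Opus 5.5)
The plan is to reduce to the standard form and then use the matrix characterization of regularity, Theorem \ref{thm:regularity_char}, together with Remark \ref{rem:multiplicity_implies_reg}. The ambient algebra $\sA$ in the statement is implicitly finite dimensional, since multiplicity free subalgebras are only defined inside some $\mn{\vec{n}}$. So I will take $\sA=\mn{\vec{n}}$ and $\sB$ a multiplicity free unital subalgebra. By Definition \ref{def:mult_free_subalg} there are a unitary $u\in\mn{\vec{n}}$, a vector $\vec{d}\in\N^k$ and a multiplicity free $l\times k$ matrix $\Lambda$ with $\Lambda\vec d=\vec n$ and $u\sB u^*=\Phi_\Lambda(\mn{\vec d})$.

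First I check that regularity is invariant under conjugation by a unitary of $\sA$. Write $\alpha=\mathrm{Ad}\,u$, an automorphism of $\sA$. For $n\in\nor{\sA}(\sB)$ we have
\[
(unu^*)(u\sB u^*)(unu^*)^*=u\,n\sB n^*\,u^*\subseteq u\sB u^*,
\]
and the same holds with $n^*$ in place of $n$. Hence $\alpha(\nor{\sA}(\sB))=\nor{\sA}(u\sB u^*)$. Because $\alpha$ is a $*$-automorphism, it also carries the $C^*$-algebra generated by $\nor{\sA}(\sB)$ onto the one generated by $\nor{\sA}(u\sB u^*)$. So $\sB$ is regular in $\sA$ if and only if $u\sB u^*$ is.

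It therefore suffices to show that $\Phi_\Lambda(\mn{\vec d})$ is regular in $\mn{\vec n}$. By Remark \ref{rem:multiplicity_implies_reg}, a multiplicity free matrix is regular in the sense of Definition \ref{def:regular_mat}. I will confirm the two conditions directly. Each column has exactly one nonzero entry, equal to $1$, so every positive entry of any row equals $p_i=1$. For $j\in S_i$, column $j$ is the standard basis vector $e_i$, so all columns indexed by $S_i$ coincide. Theorem \ref{thm:regularity_char} then gives that $\mn{\vec d}$, embedded via $\Lambda$, is regular in $\mn{\vec n}$, which finishes the proof.

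I expect no step to be a real obstacle, since the substantive work is contained in Theorem \ref{thm:regularity_char}. The only points needing care are the unitary-conjugation invariance and reading $\sA$ as finite dimensional. As a fallback that avoids the cited theorem, I would argue directly. The matrix units of each block $\mn{n_i}$ that permute the $d_j\times d_j$ diagonal blocks of a fixed size are partial isometries normalizing $\Phi_\Lambda(\mn{\vec d})$, and together with $\Phi_\Lambda(\mn{\vec d})$ they should generate $\mn{\vec n}$.
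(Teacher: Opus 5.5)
Your proposal is correct and follows the paper's proof: conjugate $\sB$ to $\Phi_\Lambda(\mn{\vec d})$ with $\Lambda$ multiplicity free, use Remark \ref{rem:multiplicity_implies_reg} to see that $\Lambda$ is a regular matrix, and apply Theorem \ref{thm:regularity_char}. Your explicit check that regularity is invariant under $\mathrm{Ad}\,u$ fills in a step the paper leaves implicit; there, the reverse inclusion of normalizers follows by applying the same computation with $u^*$ in place of $u$.
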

\begin{proof}
   As $\sB$ is multiplicity free, by definition there exists $u \in \sU(\mn{\vec{n}})$ such that
\[
u\sB u^*=\Phi_{\Lambda}(\mn{\vec{d}})
\]
for some $\vec{d}\in\N^k$ and a multiplicity free $l\times k$ matrix $\Lambda$. By Remark \ref{rem:multiplicity_implies_reg}, $\Lambda$ is regular. Hence, Theorem \ref{thm:regularity_char} implies that $\sB$ is regular.

\end{proof}
\begin{definition}[Multiplicity Free Embedding]\label{def:mult_free_embed}
    Let $\sA$ and $\sB$ be finite-dimensional abstract $C^*$-algebras. An embedding $\Phi:\sB \to \sA$ is called \emph{multiplicity free} if the image $\Phi(\sB)$ is a multiplicity free subalgebra of $\sA$.
\end{definition}

\begin{remark}
    It is easy to see that for $d\in \N$ and $\vec{n}=(n_1, \dots, n_l)\in \N^l$, there exists a unital embedding of $\mn{d}$ into $\mn{\vec{n}}$ if and only if $d \mid n_i$ for every $i=1,\dots,l$. However, it is readily seen that none of these embeddings is multiplicity free unless $\vec{n}=d\in\N$.
\end{remark}

From the Defintion \ref{def:mult_free_embed} and Proposition \ref{prop:mult_iff_permutation}, we get the following corollary:

\begin{cor}
    Every unital embedding of $\mn{\vec{d}}$ into $\mn{\vec{d}}$ is multiplicity free. Moreover, the total number of such embeddings is precisely
    \[
        \prod_{t} m_t!,
    \]
    where $m_t$ denotes the multiplicity of the value $t$ among the entries of $\vec{d}$. In particular, if all the entries of $\vec{d}$ are equal, then the total number of such embeddings is $k!$, where $k$ is the length of $\vec{d}$.
\end{cor}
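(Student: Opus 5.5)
The plan is to reduce both assertions to Proposition \ref{prop:mult_iff_permutation}, and to interpret ``the number of such embeddings'' as the number of inclusion matrices $\Lambda$ with $\Lambda\vec{d}=\vec{d}$. These are exactly the data $(\vec{d},\Lambda)$ giving the canonical embeddings $\Phi_\Lambda$ of the Preliminaries.

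\textbf{Step 1 (multiplicity freeness).} Let $\Phi:\mn{\vec{d}}\to\mn{\vec{d}}$ be a unital embedding, and put $\sB=\Phi(\mn{\vec{d}})$. By Proposition \ref{prop:subalg_of_fin_dim_alg}, there are a unitary $u$, a vector $\vec{c}\in\N^r$ and a matrix $\Lambda$ with $\Lambda\vec{c}=\vec{d}$ and $u\sB u^*=\Phi_\Lambda(\mn{\vec{c}})$.

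Since $\sB\cong\mn{\vec{d}}$, comparing minimal central projections and their cut-downs (as in the proof of Theorem \ref{thm:mult_free_invariance}) gives $r=k$. It also shows that $\vec{c}$ is a permutation of $\vec{d}$. Permuting the columns of $\Lambda$ and the entries of $\vec{c}$ by the same permutation leaves both the image of $\Phi_\Lambda$ and the column sums unchanged. So we may assume $\vec{c}=\vec{d}$, and then $\Lambda$ is a $k\times k$ matrix with $\Lambda\vec{d}=\vec{d}$.

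Because $\Phi_\Lambda$ is a unital embedding, $\Lambda$ has no zero rows and no zero columns. The implication (iii)$\Rightarrow$(i) of Proposition \ref{prop:mult_iff_permutation} then shows that $\Lambda$ is multiplicity free. Hence $\sB$ is a multiplicity free subalgebra, and $\Phi$ is a multiplicity free embedding in the sense of Definition \ref{def:mult_free_embed}.

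As a sanity check, injectivity together with $\dim\sB=\dim\mn{\vec{d}}$ forces $\sB=\mn{\vec{d}}$. This is consistent with $\Lambda$ being a permutation matrix.

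\textbf{Step 2 (counting).} By Proposition \ref{prop:mult_iff_permutation}, the admissible matrices are exactly the permutation matrices $P_\sigma$, $\sigma\in S_k$, satisfying $P_\sigma\vec{d}=\vec{d}$. This equation says $d_{\sigma(j)}=d_j$ for all $j$, so $\sigma$ must preserve each level set $\{j: d_j=t\}$. Conversely, any $\sigma$ preserving every level set satisfies the equation.

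Distinct permutations give distinct matrices. The stabilizer of $\vec{d}$ in $S_k$ is therefore the product of the symmetric groups on the level sets, which has order $\prod_t m_t!$. When all entries of $\vec{d}$ are equal there is a single level set of size $k$, giving $k!$.

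As a cross-check, one could instead compute the coefficient of $x_1^{d_1}\cdots x_k^{d_k}$ in $P^{\vec{d}}_{\vec{d}}$ via Proposition \ref{prop:num_of_mult_free}. By Step 1 every assignment counted there is a bijection, so this reproduces the same number.

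\textbf{Main obstacle.} The only delicate point is the bookkeeping in Step 1: justifying that the source vector may be taken equal to $\vec{d}$ after reordering. I would handle this with the central-projection argument already used in Theorem \ref{thm:mult_free_invariance}.
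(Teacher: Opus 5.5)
Your proposal is correct and is essentially the paper's argument. The paper derives the corollary directly from Definition \ref{def:mult_free_embed} and Proposition \ref{prop:mult_iff_permutation}: $\Lambda\vec{d}=\vec{d}$ forces a permutation matrix, and one then counts the permutations fixing $\vec{d}$. Your reduction of $\vec{c}$ to $\vec{d}$ makes explicit what the paper leaves implicit. One small correction: reordering the columns of $\Lambda$ changes the image of $\Phi_\Lambda$ by a block-permutation unitary rather than leaving it literally unchanged. This is harmless, since that unitary can be absorbed into $u$.
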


\begin{remark}
    Suppose $\sB$ and $\sA$ are abstract finite dimensional $C^*$-algebras. It was observed in \cite[Remark 4.8]{BGK2026} that regularity can depend strongly on the particular embedding of the child $C^*$-algebra into the parent algebra. In other words, there may exist two distinct unital embeddings of $\sB$ into $\sA$ such that one is regular while the other is not. The following result shows that this phenomenon does not occur for multiplicity free subalgebras.
\end{remark}

\begin{theorem}
    Suppose there exists a multiplicity free embedding of an abstract finite dimensional $C^*$-algebra $\sB$ into another finite dimensional abstract $C^*$-algebra $\sA$. Then every embedding of $\sB$ into $\sA$ is multiplicity free.
\end{theorem}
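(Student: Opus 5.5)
Since $\sB$ and $\sA$ are abstract finite dimensional $C^*$-algebras, I will fix isomorphisms $\sB\cong\mn{\vec{d}}$ with $\vec{d}\in\N^k$ and $\sA\cong\mn{\vec{n}}$ with $\vec{n}\in\N^l$. These vectors are determined up to reordering by the algebras themselves. Multiplicity freeness of an embedding only depends on its image up to unitary conjugation, by Definition \ref{def:mult_free_subalg} and the remark following Theorem \ref{thm:mult_free_invariance}. So I reduce to the following: let $\Phi,\Psi:\mn{\vec{d}}\to\mn{\vec{n}}$ be unital embeddings, with $\Phi$ multiplicity free, and show that $\Psi$ is multiplicity free.

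First, by Proposition \ref{prop:subalg_of_fin_dim_alg}, the image $\Psi(\mn{\vec{d}})$ is unitarily conjugate to $\sB(\vec{c},\Lambda_2)$ for some $\vec{c}\in\N^r$ and some $\Lambda_2$ with $\Lambda_2\vec{c}=\vec{n}$. Since $\Psi$ is injective, $\sB(\vec{c},\Lambda_2)\cong\mn{\vec{d}}$. By the uniqueness of the Wedderburn decomposition of finite dimensional $C^*$-algebras (which is also the argument used in the proof of Theorem \ref{thm:mult_free_invariance}, comparing minimal central projections and the cut-down algebras), we get $r=k$ and $\vec{c}$ is a permutation of $\vec{d}$. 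Reordering the columns of $\Lambda_2$ accordingly does not change column sums or the subalgebra $\sB(\vec{c},\Lambda_2)$, so I may assume $\vec{c}=\vec{d}$ and $\Lambda_2\vec{d}=\vec{n}$. Moreover $\Lambda_2$ has no zero columns, because $\Phi_{\Lambda_2}$ is injective.

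Likewise, the image of $\Phi$ is conjugate to some $\sB(\vec{d}',\Lambda_1)$. By the definition together with Theorem \ref{thm:mult_free_invariance}, $\Lambda_1$ is multiplicity free, and after the same reordering I get $\Lambda_1\vec{d}=\vec{n}$. Now Proposition \ref{prop:mul_embed} applies to the fixed pair $(\vec{d},\vec{n})$. Since $\Lambda_1$ is multiplicity free, part (i) gives $S_d=S_n$. If $\Lambda_2$ were not multiplicity free, part (ii) would give $S_d<S_n$, which is a contradiction. Hence $\Lambda_2$ is multiplicity free, and so $\Psi(\sB)$ is a multiplicity free subalgebra, i.e. $\Psi$ is a multiplicity free embedding.

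The only delicate step is the reduction to a common $\vec{d}$. Proposition \ref{prop:mul_embed} needs the same vector $\vec{d}$ for both matrices, up to a simultaneous column permutation. This requires showing that an isomorphism $\mn{\vec{c}}\cong\mn{\vec{d}}$ forces $\vec{c}$ to be a permutation of $\vec{d}$, and that permuting columns of $\Lambda$ together with the entries of $\vec{d}$ preserves both the image subalgebra (up to unitary conjugation by a block permutation) and multiplicity freeness. Both facts are standard, but I will state them explicitly.
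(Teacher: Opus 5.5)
Your proposal is correct and follows the paper's proof, which simply invokes Proposition \ref{prop:mul_embed}: a multiplicity free inclusion matrix for $(\vec{d},\vec{n})$ forces $S_d=S_n$, while any inclusion matrix with no zero columns that is not multiplicity free forces $S_d<S_n$. Your extra care in reducing to a common $\vec{d}$ is sound. It could even be shortened, since the proposition only needs $S_c=S_d$, and that already follows from $\vec{c}$ being a permutation of $\vec{d}$, so no column reordering is required.
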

\begin{proof}
    Follows from Proposition \ref{prop:mul_embed}.
\end{proof}

\begin{prop}\label{prop:masa_is_mult_free}
    Every masa in $\mn{\vec{n}}$ is multiplicity free.
\end{prop}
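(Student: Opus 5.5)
Let $\sB\subseteq\mn{\vec{n}}$ be a masa. Since $\sB$ is abelian, I will describe it directly rather than through a general structure theorem. By Proposition \ref{prop:subalg_of_fin_dim_alg}, after conjugating by a unitary $u$ we may assume $\sB=\sB(\vec{d},\Lambda)$ for some $\vec{d}\in\N^k$ and some $l\times k$ inclusion matrix $\Lambda$. Because $\sB$ is abelian, $\mn{\vec{d}}$ is abelian, so $d_j=1$ for every $j$. Thus $\sB\cong\C^k$, and its minimal projections $E_j=\Phi_\Lambda(e_j)$ satisfy
\[
E_j^{(i)}=\mathbb{I}_{\Lambda_{ij}}
\]
(placed in the appropriate diagonal position) in the $i$-th summand $\mn{n_i}$. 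The goal is to show that every column sum $\sum_i\Lambda_{ij}$ equals $1$, which is precisely multiplicity freeness in the sense of Definition \ref{def:mult_free_subalg}.

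Suppose, for contradiction, that some column $j$ has $\sum_i\Lambda_{ij}\ge 2$. Then $E_j$ has total rank at least $2$, spread across the summands. I will split into two cases and in each produce an element of $\sB'\cap\mn{\vec{n}}$ that does not lie in $\sB$, contradicting maximality (for a masa, $\sB'\cap\sA=\sB$).

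\textbf{Case 1:} some single entry satisfies $\Lambda_{ij}\ge 2$. Take any non-scalar matrix $y\in\mn{\Lambda_{ij}}$, placed in the block of $\mn{n_i}$ where $E_j^{(i)}$ lives, with zeros elsewhere. This element commutes with every $E_{j'}$: on that block each $E_{j'}$ acts either as the identity or as zero. Hence it lies in $\sB'$. It is not in $\sB$, because elements of $\sB$ act as scalars on each block $E_j^{(i)}$.

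\textbf{Case 2:} two distinct rows satisfy $\Lambda_{ij},\Lambda_{i'j}\ge1$. Take $y=E_j^{(i)}$, viewed as an element of $\mn{\vec{n}}$ supported only in the $i$-th summand. It is a projection commuting with all of $\sB$, so it lies in $\sB'$. It is not in $\sB$, since any element of $\sB$ equal to $1$ on $E_j^{(i)}$ must also equal $1$ on $E_j^{(i')}$, whereas $y$ vanishes there.

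In both cases maximality is contradicted. Therefore every column of $\Lambda$ sums to exactly $1$ (no column is zero, by injectivity of $\Phi_\Lambda$), so $\Lambda$ is multiplicity free and $\sB$ is a multiplicity free subalgebra.

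The only step needing care is checking that the elements constructed in the two cases commute with $\sB$. This follows from the block-diagonal form of $\Phi_\Lambda$ with $d_j=1$: every element of $\sB$ is a diagonal matrix that is constant on each block $E_j^{(i)}$.
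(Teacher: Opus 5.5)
Your proof is correct, but it takes a different route from the paper's.

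\textbf{The paper's route.} The paper argues structurally. A masa of $\mn{\vec{n}}$ splits as a direct sum of masas of the summands $\mn{n_i}$, and each of these is unitarily conjugate to the diagonal. So up to conjugacy the only masa is the full diagonal algebra. Its inclusion matrix, with $\vec{d}=(1,\dots,1)$, is then written down explicitly and visibly has exactly one $1$ in each column.

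\textbf{Your route.} You start from an arbitrary presentation $\sB(\vec{d},\Lambda)$ and get $d_j=1$ from commutativity. You then use maximality only in the form $\sB'\cap\mn{\vec{n}}=\sB$. Column sums $\ge 2$ are ruled out by exhibiting explicit commutant elements outside $\sB$. This is exactly the $(i)\Rightarrow(ii)$ argument of Proposition~\ref{lem:mult_free_iff_rel_com}, specialised to $d_j=1$. Indeed, once that proposition is available, the statement follows in one line from $\sB'\cap\sA=\sB$.

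\textbf{What each approach buys.} The paper's argument gives more information: it identifies every masa and its inclusion matrix up to conjugacy. However, it relies on two facts it does not prove. First, a masa contains the centre and therefore splits over the summands. Second, masas of $\mn{n_i}$ are conjugate to the diagonal, which is simultaneous diagonalisation. Your argument avoids any classification of masas and is self-contained given Proposition~\ref{prop:subalg_of_fin_dim_alg}.

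\textbf{Two small points to state explicitly.} A masa automatically contains $1$, because adjoining $1$ keeps the algebra abelian; this is what lets Proposition~\ref{prop:subalg_of_fin_dim_alg} apply. Also, maximality gives $\sB'\cap\sA\subseteq\sB$, for instance by splitting a commuting element into self-adjoint parts. Alternatively, choose $y$ self-adjoint in Case~1, so that it contradicts maximality directly.
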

\begin{proof}
    Indeed, every masa of $\mn{\vec{n}}$ is the direct sum of masas of the component algebras $\mn{n_i}$. Moreover, up to unitary conjugacy, $\mn{n_i}$ admits a unique masa, namely
\[
\Delta_1^{(i)}:= \bigoplus_{j=1}^{n_i} \left( \C \otimes \mathbb{I}_1 \right) \subseteq \mn{n_i}.
\]
Consequently, up to unitary conjugacy, the unique masa of $\mn{\vec{n}}$ is
\[
\Delta_1^{\vec{n}}:=\bigoplus_{i=1}^l \Delta_1^{(i)}.
\]

We now compute the inclusion matrix of the inclusion
\[
\Delta_1^{\vec{n}} \subseteq \mn{\vec{n}}.
\]
To this end, observe that $\Delta_1^{\vec{n}}=\operatorname{im}(\Phi_\Lambda)$, where
\[
\Phi_\Lambda:\C^{n_1+\cdots+n_l}\longrightarrow \mn{\vec{n}},
\]
and the inclusion matrix $\Lambda$ together with the dimension vector $\vec{d}$ are given by
\[
\Lambda:=
\begin{pmatrix}
\underbrace{1\ \cdots\ 1}_{n_1} &
\underbrace{0\ \cdots\ 0}_{n_2} &
\cdots &
\underbrace{0\ \cdots\ 0}_{n_l}
\\
\underbrace{0\ \cdots\ 0}_{n_1} &
\underbrace{1\ \cdots\ 1}_{n_2} &
\cdots &
\underbrace{0\ \cdots\ 0}_{n_l}
\\
\vdots & \vdots & \ddots & \vdots
\\
\underbrace{0\ \cdots\ 0}_{n_1} &
\underbrace{0\ \cdots\ 0}_{n_2} &
\cdots &
\underbrace{1\ \cdots\ 1}_{n_l}
\end{pmatrix}_{\,l\times (n_1+\cdots+n_l)}
\qquad
\vec{d}:=
\begin{pmatrix}
1\\
\vdots\\
1\\
\vdots\\
1
\end{pmatrix}_{(n_1+\cdots+n_l)\times 1}
\]

It is evident that $\Lambda$ is multiplicity free. Hence, $\Delta_1^{\vec{n}}$ is a multiplicity free subalgebra of $\mn{\vec{n}}$.
\end{proof}

\begin{lem}\label{lem:rel_comm_invariance}
    Let $\sB \subseteq \sA$ be a unital inclusion of (not necessarily finite-dimensional) $C^*$-algebras, and let $\sigma \in \textrm{Aut}(\sA)$. Then  $\sB ' \cap \sA \subseteq \sB$ if and only if $\sB'\cap \sA = \sZ ( \sB)$ if and only if $\sigma(\sB)' \cap \sA = \sZ(\sigma(\sB))$.
\end{lem}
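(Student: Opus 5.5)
The lemma has two parts. The first is an equivalence internal to a single inclusion: $\sB'\cap\sA\subseteq\sB$ holds if and only if $\sB'\cap\sA=\sZ(\sB)$. The second says that the relative commutant condition is invariant under automorphisms of $\sA$. I will prove them separately.

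\textbf{First equivalence.} The containment $\sZ(\sB)\subseteq \sB'\cap\sA$ always holds, since central elements of $\sB$ lie in $\sA$ and commute with $\sB$.

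Suppose $\sB'\cap\sA\subseteq\sB$. Any $x\in\sB'\cap\sA$ then lies in $\sB$ and commutes with $\sB$, so $x\in\sB\cap\sB'=\sZ(\sB)$. This gives $\sB'\cap\sA=\sZ(\sB)$.

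Conversely, if $\sB'\cap\sA=\sZ(\sB)$, then $\sB'\cap\sA\subseteq\sZ(\sB)\subseteq\sB$.

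\textbf{Automorphism invariance.} Let $\sigma\in\textrm{Aut}(\sA)$. The key step is the identity
\[
\sigma(\sB'\cap\sA)=\sigma(\sB)'\cap\sA .
\]
For the inclusion $\subseteq$: if $x\in\sA$ commutes with every $b\in\sB$, then $\sigma(x)\sigma(b)=\sigma(xb)=\sigma(bx)=\sigma(b)\sigma(x)$. Hence $\sigma(x)$ commutes with $\sigma(\sB)$.

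For the inclusion $\supseteq$: apply the same argument with $\sigma^{-1}$ to the subalgebra $\sigma(\sB)$. Here it matters that $\sigma$ is surjective onto $\sA$, so every $y\in\sA$ has the form $\sigma(x)$ with $x\in\sA$.

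The same computation restricted to $\sB$ shows that $\sigma$ maps $\sZ(\sB)$ onto $\sZ(\sigma(\sB))$, because $\sigma|_{\sB}$ is a $*$-isomorphism of $\sB$ onto $\sigma(\sB)$.

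Applying the bijection $\sigma$ to both sides of $\sB'\cap\sA=\sZ(\sB)$ now gives $\sigma(\sB)'\cap\sA=\sZ(\sigma(\sB))$. Applying $\sigma^{-1}$ gives the reverse implication.

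There is no real obstacle here. The only point requiring care is that the relative commutant is taken inside all of $\sA$, which is why surjectivity of $\sigma$ is needed; an injective endomorphism of $\sA$ would not suffice. The intended application is to the inner automorphisms $\operatorname{Ad}u$. Combined with Proposition~\ref{prop:subalg_of_fin_dim_alg}, this reduces questions about the relative commutant condition to the standard form $\sB(\vec d,\Lambda)$.
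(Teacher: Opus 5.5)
Your proof is correct and follows the same route as the paper, whose proof consists exactly of the two identities $\sigma(\sB)'\cap\sA=\sigma(\sB'\cap\sA)$ and $\sigma(\sZ(\sB))=\sZ(\sigma(\sB))$ that you verify. You also spell out the elementary equivalence $\sB'\cap\sA\subseteq\sB \iff \sB'\cap\sA=\sZ(\sB)$, which the paper leaves implicit.
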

\begin{proof}
    This follows from the two observations: $\sigma(\sB)'\cap \sA = \sigma(\sB' \cap \sA)$ and $\sigma(\sZ(\sB)) = \sZ(\sigma(\sB))$.
\end{proof}

\begin{lem}\label{lem:rel_com}
    Consider the unital subalgebra $\mn{\vec{d}}$ of $\mn{\vec{n}}$ with the inclusion matrix $\Lambda$. Then
    \[
    \mn{\vec{d}}' \cap \mn{\vec{n}} = \bigoplus_{i=1}^{l} \left( \bigoplus_{j=1}^k \left( \mathbb{I}_{d_j} \otimes \mn{\Lambda_{ij}} \right) \right).
    \]
\end{lem}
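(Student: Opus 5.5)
Since the relative commutant splits along the central decomposition $\mn{\vec{n}}=\bigoplus_{i=1}^l \mn{n_i}$, I will work one summand at a time. An element $x=(x_1,\dots,x_l)$ commutes with $\Phi_\Lambda(\mn{\vec{d}})$ exactly when each $x_i\in\mn{n_i}$ commutes with $\Phi^{(i)}_\Lambda(\mn{\vec{d}})=\bigoplus_j(\mn{d_j}\otimes\mathbb{I}_{\Lambda_{ij}})$. So it suffices to fix $i$ and compute the commutant of this block-diagonal algebra in $\mn{n_i}$, where $n_i=\sum_j d_j\Lambda_{ij}$. Here I read $\mathbb{I}_{d_j}\otimes\mn{\Lambda_{ij}}$ as the commutant of $\mn{d_j}\otimes\mathbb{I}_{\Lambda_{ij}}$ inside $\mn{d_j\Lambda_{ij}}$, up to the canonical shuffle identification $\C^{d_j}\otimes\C^{\Lambda_{ij}}\cong\C^{\Lambda_{ij}}\otimes\C^{d_j}$. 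Summands with $\Lambda_{ij}=0$ are omitted.

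\emph{Step 1: reduction to the diagonal blocks.} Write $x_i$ as a block matrix $(X_{jj'})$ relative to the decomposition $\C^{n_i}=\bigoplus_j \C^{d_j\Lambda_{ij}}$. The central projections $P_j=\Phi^{(i)}_\Lambda(e_j)$ belong to the algebra, so $x_i$ must commute with every $P_j$. This forces $X_{jj'}=0$ for $j\neq j'$, hence $x_i=\bigoplus_j X_{jj}$. Conversely, a block-diagonal $x_i$ commutes with the whole algebra if and only if each $X_{jj}$ commutes with $\mn{d_j}\otimes\mathbb{I}_{\Lambda_{ij}}$.

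\emph{Step 2: commutant of an ampliation.} Show that $(\mn{d}\otimes\mathbb{I}_p)'\cap\mn{dp}$ equals the set of $p\times p$ block matrices $(Y_{rs})$ with $d\times d$ blocks $Y_{rs}=\lambda_{rs}\mathbb{I}_d$ and $(\lambda_{rs})\in\mn{p}$. To prove this, write $X=(Y_{rs})_{r,s=1}^p$. The condition $X(A\otimes\mathbb{I}_p)=(A\otimes\mathbb{I}_p)X$ for all $A\in\mn{d}$ is equivalent to $Y_{rs}A=AY_{rs}$ for all $r,s$ and all $A$. Since $\mn{d}'=\C\mathbb{I}_d$, each block $Y_{rs}$ is a scalar multiple of $\mathbb{I}_d$. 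The resulting algebra is exactly $\mathbb{I}_{d}\otimes\mn{p}$, in the identification above, which is isomorphic to $\mn{p}$.

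\emph{Step 3: assembly.} Combining Steps 1 and 2 over $j$, and then over $i$, gives the displayed formula. The only real obstacle is bookkeeping. One has to be careful that the notation $\mathbb{I}_{d_j}\otimes\mn{\Lambda_{ij}}$ denotes the "scalar-block" matrices from Step 2, not literally the block-diagonal form used for $\mn{d}\otimes\mathbb{I}_p$. I will state this identification explicitly, either via the flip unitary or by defining the notation as the set from Step 2.
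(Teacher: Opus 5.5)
Your proposal is correct and follows the paper's route: you split the relative commutant over the summands $\mn{n_i}$ and then apply the ampliation identity $(\mn{d}\otimes\mathbb{I}_p)'\cap\mn{dp}=\mathbb{I}_d\otimes\mn{p}$. You also fill in two steps the paper only asserts. These are the vanishing of the off-diagonal blocks, via the central projections $\Phi^{(i)}_\Lambda(e_j)$, and the proof of the ampliation identity from $\mn{d}'=\C\mathbb{I}_d$.
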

\begin{proof} Observe that for $pd = n$,
    \begin{equation}\label{eqn:b_comm}
         \left( \mn{d} \otimes \mathbb{I}_{p} \right)' \cap \mn{n}
        =
            \mathbb{I}_{d} \otimes \mn{p}.
    \end{equation}
    Indeed (recall notations from \S~\ref{sec:prelims}),
    \begin{align*}
    \mn{\vec{d}}' \cap \mn{\vec{n}}
    &= \left\{ (x_1, \dots, x_l) \in \mn{\vec{n}} :
    x_i \in \Phi_\Lambda^{(i)}(\mn{\vec{d}})' \cap \mn{n_i}
    \text{ for all } i \right\} \\
    &= \left\{ (x_1, \dots, x_l) \in \mn{\vec{n}} :
    x_i \in \bigoplus_{j=1}^k
    \left( \mathbb{I}_{d_j} \otimes \mn{p_j} \right)
    \text{ for all } i \right\},
    \qquad \text{by \eqref{eqn:b_comm}}.
    \end{align*}
\end{proof}

We are now ready to prove the main result of this section, which characterizes the multiplicity free subalgebras of finite-dimensional $C^*$-algebras.
\begin{prop}\label{lem:mult_free_iff_rel_com}
    Let $\sB\subseteq\sA$ be a unital inclusion of finite-dimensional $C^*$-algebras. Then the following are equivalent: 
    \begin{itemize}
        \item[(i)] $\sB' \cap \sA \subseteq \sB$.
        \item[(ii)] $\sB$ is multiplicity free.
    \end{itemize}
\end{prop}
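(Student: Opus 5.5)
First reduce to the standard form. Since $\sA$ is finite dimensional, $\sA \cong \mn{\vec{n}}$ for some $\vec{n}\in\N^l$. By Proposition \ref{prop:subalg_of_fin_dim_alg}, there are a unitary $u\in\mn{\vec{n}}$, $\vec{d}\in\N^k$ and an inclusion matrix $\Lambda$ with $\Lambda\vec{d}=\vec{n}$ such that $u\sB u^* = \Phi_\Lambda(\mn{\vec{d}})$. Conjugation by $u$ is an automorphism of $\sA$. So Lemma \ref{lem:rel_comm_invariance} shows that condition (i) holds for $\sB$ if and only if it holds for $\Phi_\Lambda(\mn{\vec{d}})$. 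By Definition \ref{def:mult_free_subalg} together with Theorem \ref{thm:mult_free_invariance}, $\sB$ is multiplicity free if and only if this particular $\Lambda$ is a multiplicity free matrix. Indeed, any other presentation $\Lambda'$, obtained from a unitary $v$ with $v\sB v^*=\Phi_{\Lambda'}(\mn{\vec{d}'})$, can be compared with $\Lambda$ by conjugating with $vu^*$. I will check that step carefully, since Theorem \ref{thm:mult_free_invariance} is stated for equal images, not unitarily conjugate ones. If needed, I will redo its rank argument directly: unitary conjugation preserves the ranks of the minimal central projections in each summand and the sizes of the cut-down algebras, so the columns of $\Lambda'$ are a permutation of those of $\Lambda$. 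It therefore suffices to prove that $\Phi_\Lambda(\mn{\vec{d}})'\cap\mn{\vec{n}}\subseteq\Phi_\Lambda(\mn{\vec{d}})$ if and only if $\Lambda$ is multiplicity free.

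Next compute both sides with Lemma \ref{lem:rel_com}. The relative commutant is $\bigoplus_i\bigoplus_j(\mathbb{I}_{d_j}\otimes\mn{\Lambda_{ij}})$, and its dimension is $\sum_{i,j}\Lambda_{ij}^2$. The centre $\sZ(\Phi_\Lambda(\mn{\vec{d}}))$ is spanned by the $k$ projections $\Phi_\Lambda(e_j)$, so it has dimension $k$. The centre is always contained in the relative commutant. Moreover, $\sB'\cap\sA\subseteq\sB$ is equivalent to $\sB'\cap\sA=\sZ(\sB)$, because anything in $\sB$ that commutes with $\sB$ is central. Hence (i) holds if and only if $\sum_{i,j}\Lambda_{ij}^2 = k$.

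Finally, carry out the combinatorial comparison. Every column of $\Lambda$ is nonzero, since $\Phi_\Lambda$ is injective. So for each $j$ we have $\sum_i\Lambda_{ij}^2\ge 1$, with equality exactly when column $j$ contains a single entry $1$ and zeros elsewhere, which means $\sum_i\Lambda_{ij}=1$. Summing over $j$ gives $\sum_{i,j}\Lambda_{ij}^2\ge k$, with equality if and only if every column sums to $1$, that is, if and only if $\Lambda$ is multiplicity free. This proves the equivalence.

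The only delicate point is the well-definedness issue in the first paragraph: ensuring that "multiplicity free" does not depend on the chosen unitary and presentation, and that the identification $\sA\cong\mn{\vec{n}}$ is harmless. Everything else is dimension counting.
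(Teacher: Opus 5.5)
Your proof is correct, and it argues differently from the paper.

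\textbf{The paper's route.} After the same reduction to $\Phi_\Lambda(\mn{\vec{d}})\subseteq\mn{\vec{n}}$, the paper works element by element using Lemma~\ref{lem:rel_com}.
\begin{itemize}
\item When every column of $\Lambda$ has a single $1$, each block $y_{i_0 j}$ of an element $Y$ of the relative commutant is a scalar $\lambda_j$. It then checks that $Y=\Phi_\Lambda(\lambda_1\mathbb{I}_{d_1},\dots,\lambda_k\mathbb{I}_{d_k})$.
\item Conversely, if some column sum is at least $2$, it builds an explicit $Y\in(\sB'\cap\sA)\setminus\sB$, with a case split. If some entry $\Lambda_{i_0j_0}\ge 2$, it takes a non-scalar $y_{i_0j_0}$. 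If a column has two nonzero entries, it takes a $Y$ equal to $1$ in one block and $0$ in the other.
\end{itemize}

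\textbf{Your route.} You combine $\sZ(\sB)\subseteq\sB'\cap\sA$ with $\sB\cap\sB'=\sZ(\sB)$. This turns (i) into the dimension identity $\sum_{i,j}\Lambda_{ij}^2=k$. The column-wise inequality $\sum_i\Lambda_{ij}^2\ge\sum_i\Lambda_{ij}\ge 1$ then finishes the argument with no case analysis.

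\textbf{What each buys.} Your argument is shorter. It also quantifies the failure of (i): the defect is
\[
\dim(\sB'\cap\sA)-\dim\sZ(\sB)=\sum_j\Big(\sum_i\Lambda_{ij}^2-1\Big).
\]
Both dimensions are invariant under unitary conjugation, so your count shows directly that multiplicity freeness does not depend on the chosen presentation. The paper's argument is constructive: it produces concrete witnesses in the relative commutant lying outside $\sB$.

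\textbf{An unnecessary worry.} The well-definedness concern in your first paragraph is not actually needed, though your fallback rank argument would handle it. The criterion you prove (standard subalgebra satisfies (i) iff $\Lambda$ is multiplicity free) holds for every presentation. For (ii)$\Rightarrow$(i), use the presentation supplied by Definition~\ref{def:mult_free_subalg}. For (i)$\Rightarrow$(ii), use the one given by Proposition~\ref{prop:subalg_of_fin_dim_alg}.
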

\begin{proof}
    Without loss of generality assume $\sB = \mn{\vec{d}}$ and $\sA= \mn{\vec{n}}$ where $\vec{d}= (d_1, \dots, d_k)$ and $\vec{n} = (n_1, \dots, n_l)$ and $\Phi_\Lambda:\mn{\vec{d}}\to \mn{\vec{n}}$ is an embedding with respect to the inclusion matrix $\Lambda$. 

    By definition, an element $Z \in \sB$ is completely determined by a tuple $x = (x_1, \dots, x_k) \in \mn{\vec{d}}$, and takes the explicit block-diagonal form:
    $$Z = \Phi_\Lambda(x) = \bigoplus_{i=1}^l \bigoplus_{j=1}^k (x_j \otimes \mathbb{I}_{\Lambda_{ij}})$$
    From Lemma \ref{lem:rel_com}, the relative commutant of $\sB$ in $\mn{\vec{n}}$ is known to be:
    $$\sB' \cap \mn{\vec{n}} = \bigoplus_{i=1}^l \bigoplus_{j=1}^k (\mathbb{I}_{d_j} \otimes \mn{\Lambda_{ij}})$$

    Thus, an arbitrary element $Y \in \sB' \cap \mn{\vec{n}}$ is determined by a collection of matrices $\{y_{ij}\}_{i=1, j=1}^{l, k}$ where $y_{ij} \in \mn{\Lambda_{ij}}$, taking the form:$$Y = \bigoplus_{i=1}^l \bigoplus_{j=1}^k (\mathbb{I}_{d_j} \otimes y_{ij})$$We will prove the forward and backward directions separately.

    \vspace{0.3cm}
    \noindent($(ii)\implies (i)$) Assume $\sum_{i=1}^l \Lambda_{ij} \le 1$ for all $j=1, \dots, k$. We want to show that any $Y \in \sB' \cap \mn{\vec{n}}$ belongs to $\sB$. 
    
    Let $Y = \bigoplus_{i=1}^l \bigoplus_{j=1}^k (\mathbb{I}_{d_j} \otimes y_{ij}) \in B' \cap \mn{\vec{n}}$. 
    
    Fix an index $j \in \{1, \dots, k\}$. Since $\Phi_\Lambda$ is an embedding, no row or column of $\Lambda$ is zero. Thus the condition $\sum_{i=1}^l \Lambda_{ij} \le 1$ implies:

    There is exactly one index $i_0$ such that $\Lambda_{i_0 j} = 1$, and $\Lambda_{ij} = 0$ for all $i \neq i_0$.
    Because $\Lambda_{i_0 j} = 1$, the matrix $y_{i_0 j}$ belongs to $\mn{1} \cong \mathbb{C}$. Thus, $y_{i_0 j}$ is just a scalar $\lambda_j$.
    We choose $x_j = \lambda_j \mathbb{I}_{d_j} \in \mn{d_j}$.

    Now consider the image of $x = (x_1, \dots, x_k)$ under $\Phi_\Lambda$. For the non-zero block at $(i_0, j)$, we have:
    $$x_j \otimes \mathbb{I}_{\Lambda_{i_0 j}} = (\lambda_j \mathbb{I}_{d_j}) \otimes \mathbb{I}_1 = \mathbb{I}_{d_j} \otimes \lambda_j = \mathbb{I}_{d_j} \otimes y_{i_0 j}$$
    For all other $i \neq i_0$, both sides are $0$. Since this holds for every $j$, we have $\Phi_\Lambda(x) = Y$. Therefore, $Y \in \sB$, proving $\sB' \cap \mn{\vec{n}} \subseteq \sB$.

    \vspace{0.3cm}
    \noindent($(i) \implies (ii)$) Assume $\sB' \cap \mn{\vec{n}} \subseteq \sB$. We want to show $\sum_{i=1}^l \Lambda_{ij} \le 1$ for all $j$.
    We proceed by contrapositive. Suppose there exists some column index $j_0$ such that $\sum_{i=1}^l \Lambda_{i j_0} \ge 2$. There are two distinct ways this can happen, and we will construct a counterexample $Y \in \sB' \cap \mn{\vec{n}}$ that is not in $\sB$ for both.

    \noindent{\bf Case 1:} A single entry is strictly greater than 1 ($\Lambda_{i_0 j_0} \ge 2$ for some $i_0$).

    Construct $Y \in B' \cap \mn{\vec{n}}$ by setting $y_{i_0 j_0} \in \mn{\Lambda_{i_0 j_0}}$ to be any non-scalar matrix (e.g., a matrix with a non-zero off-diagonal entry), and setting all other $y_{ij} = 0$.
    If $Y \in \sB$, there must exist some $x = (x_1, \dots, x_k) \in \mn{\vec{d}}$ such that $\Phi_\Lambda(x) = Y$.Looking specifically at the $(i_0, j_0)$ block, we must have:$$x_{j_0} \otimes \mathbb{I}_{\Lambda_{i_0 j_0}} = \mathbb{I}_{d_{j_0}} \otimes y_{i_0 j_0}$$
    Taking the commutant of both sides with respect to $M_{d_{j_0}} \otimes M_{\Lambda_{i_0 j_0}}$, the left side is central in $M_{\Lambda_{i_0 j_0}}$, meaning it forces $y_{i_0 j_0}$ to be a scalar multiple of $\mathbb{I}_{\Lambda_{i_0 j_0}}$. This contradicts our choice of $y_{i_0 j_0}$ as a non-scalar matrix. Thus, $Y \notin \sB$.

    \noindent{\bf Case 2:} Two different entries in the same column are 1 ($\Lambda_{i_1 j_0} = 1$ and $\Lambda_{i_2 j_0} \ge 1$ for $i_1 \neq i_2$).
    Construct $Y \in B' \cap \mn{\vec{n}}$ by setting:
    \begin{itemize}
        \item[1.] $y_{i_1 j_0} = 1 \in \mn{1}$
        \item[2.] $y_{i_2 j_0} = 0 \in \mn{\Lambda_{i_2 j_0}}$
        \item[3.] All other $y_{ij} = 0$.
    \end{itemize}
    If $Y \in \sB$, there must exist some $x \in \mn{\vec{d}}$ such that $\Phi_\Lambda(x) = Y$.Looking at the $(i_1, j_0)$ block:
    $$x_{j_0} \otimes \mathbb{I}_1 = \mathbb{I}_{d_{j_0}} \otimes 1 \implies x_{j_0} = \mathbb{I}_{d_{j_0}}$$
    Looking at the $(i_2, j_0)$ block simultaneously:
    $$x_{j_0} \otimes \mathbb{I}_{\Lambda_{i_2 j_0}} = \mathbb{I}_{d_{j_0}} \otimes 0 \implies x_{j_0} = 0$$
    This yields $\mathbb{I}_{d_{j_0}} = 0$, which is a contradiction (assuming the dimension $d_{j_0} \ge 1$). Thus, $Y \notin B$.

    In both cases, we found an element in the relative commutant that is not in $\sB$. Therefore, if $\sB' \cap \mn{\vec{n}} \subseteq \sB$, it must be true that $\sum_{i=1}^l \Lambda_{ij} \le 1$ for all $j$.

    This completes the proof.
\end{proof}

\begin{cor}
    Every multiplicity free abelian subalgebra of $\mn{\vec{n}}$ is a masa.
\end{cor}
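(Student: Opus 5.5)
Let $\sB$ be a multiplicity free abelian unital subalgebra of $\mn{\vec{n}}$. The plan is to show $\sB'\cap\mn{\vec{n}}\subseteq\sB$ and then read off maximality from this containment.

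First, apply Proposition \ref{lem:mult_free_iff_rel_com} in the direction (ii) $\implies$ (i). Since $\sB$ is multiplicity free, this gives
\[
\sB'\cap \mn{\vec{n}}\subseteq \sB.
\]
Second, use that $\sB$ is abelian. Then every element of $\sB$ commutes with $\sB$, so $\sB\subseteq \sB'\cap\mn{\vec{n}}$. Combining the two containments yields $\sB=\sB'\cap\mn{\vec{n}}$.

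Third, deduce that $\sB$ is a masa. Let $\sC$ be any abelian subalgebra with $\sB\subseteq\sC\subseteq\mn{\vec{n}}$. Each element of $\sC$ commutes with $\sC$, hence with $\sB$, so $\sC\subseteq\sB'\cap\mn{\vec{n}}=\sB$. Therefore $\sC=\sB$, and $\sB$ is maximal abelian.

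Alternatively, one can argue concretely from the inclusion matrix. Abelian means every $d_j=1$, and multiplicity free means each column of $\Lambda$ contains a single $1$. Then $\Lambda\vec{d}=\vec{n}$ forces exactly $n_i$ columns to be supported in row $i$. Hence $\Phi_\Lambda(\C^k)$ is the full diagonal algebra $\Delta_1^{\vec{n}}$ from Proposition \ref{prop:masa_is_mult_free}, which is a masa.

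There is no real obstacle in either route. The only point needing care is that Proposition \ref{lem:mult_free_iff_rel_com} is stated up to unitary conjugacy, and Lemma \ref{lem:rel_comm_invariance} shows that the relative commutant condition is invariant under the inner automorphism $\mathrm{Ad}\,u$.
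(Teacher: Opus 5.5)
Your proposal is correct and is essentially the argument the paper intends. The corollary is stated without proof right after Proposition \ref{lem:mult_free_iff_rel_com}: multiplicity free gives $\sB'\cap\mn{\vec{n}}\subseteq\sB$, abelianness gives the reverse containment, so $\sB=\sB'\cap\mn{\vec{n}}$ is maximal abelian; your alternative matrix-level route (all $d_j=1$ forcing $\Phi_\Lambda(\C^k)=\Delta_1^{\vec{n}}$) is also valid and amounts to a direct converse of Proposition \ref{prop:masa_is_mult_free}.
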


    \section{Generalised Cartan Subalgebras}
    \label{sec:gen_cartan}
    
In this section, we characterize generalized Cartan subalgebras of finite-dimensional $C^*$-algebras. We begin by recalling some basic definitions from \cite{Exel2011}, including the notion of a generalized Cartan subalgebra. We then introduce the notion of Cartan embeddings for finite-dimensional $C^*$-algebras. Finally, we conclude the section by giving explicit counts of generalized Cartan embeddings for some fixed choices of $\vec{d}$ and $\vec{n}$.

\begin{definition}[{\cite[Definition 9.2]{Exel2011}}]
  Let $\sB\subseteq \sA$ be a inclusion of $C^*$-algebras. A virtual commutant of $\sB$ in $\sA$ is an $\sA$-valued  linear map $\varphi$ defined on a closed two sided ideal $\sJ$ of $\sB$ such that,
  \begin{itemize}
      \item[(i)] $\varphi(bx)= b\varphi (x)$
      \item[(ii)]$\varphi(xb)= \varphi (x)b$
  \end{itemize}
  for all $x\in \sJ$ and $b\in \sB$.
  \end{definition}
 \begin{definition}[{\cite[Definition 9.6]{Exel2011}}]
     A subalgebra $\sB$ satisfies the property \maxprime, if the range of any virtual commutant of $\sB$ in $\sA$ is contained in $\sB$.  
 \end{definition}

\begin{definition}[{\cite[Definition 12.1]{Exel2011}}]\label{def:gen_cartan}
  Let $\sA$ be a $C^*$-algebra and $\sB\subseteq \sA$ be an inclusion of $C^*$-subalgebra. We say that $\sB$ is a \emph{generalized Cartan subalgebra} of $\sA$ if it satisfies the following properties:
  \begin{itemize}
      \item[(i)] $\sB$ contains an approximate unit for $\sA$.
      \item[(ii)] $\sB$ satisfies the \maxprime.
      \item[(iii)] $\sB$ is regular in $\sA$.
      \item[(iv)] There exists a faithful conditional expectation $E: \sA \to \sB$. 
  \end{itemize}
\end{definition}

\begin{definition}[Cartan Embedding]
    Let $\sA$ and $\sB$ be finite dimensional abstract $C^*$-algebras. An embedding $\Phi:\sB \to \sA$ is called \emph{Cartan} if the image $\Phi(\sB)$ is a generalised Cartan subalgebra of $\sA$.
\end{definition}

\begin{lem}\label{lem:autmorph_pres_maxprime}
Let $\sB \subseteq \sA$ be a unital inclusion of (not necessarily finite-dimensional) $C^*$-algebras, and let $\sigma \in \textrm{Aut}(\sA)$. Then $\sB$ has \maxprime in $\sA$ if and only if $\sigma(\sB)$ has \maxprime in $\sA$.
\end{lem}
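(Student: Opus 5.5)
By symmetry it suffices to prove one implication: if $\sB$ has \maxprime in $\sA$, then so does $\sigma(\sB)$. The converse then follows by applying this implication to $\sigma^{-1}\in\textrm{Aut}(\sA)$ and the subalgebra $\sigma(\sB)$, since $\sigma^{-1}(\sigma(\sB))=\sB$. The strategy is to transport virtual commutants along $\sigma$, exactly as the relative commutant was transported in Lemma \ref{lem:rel_comm_invariance}.

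Let $\varphi\colon\sJ\to\sA$ be a virtual commutant of $\sigma(\sB)$ in $\sA$, where $\sJ$ is a closed two-sided ideal of $\sigma(\sB)$. Set $\sJ_0:=\sigma^{-1}(\sJ)$. Since $\sigma|_{\sB}\colon\sB\to\sigma(\sB)$ is a $*$-isomorphism and $\sigma^{-1}$ is isometric, $\sJ_0$ is a closed two-sided ideal of $\sB$. Define
\[
\psi:=\sigma^{-1}\circ\varphi\circ\sigma|_{\sJ_0}\colon \sJ_0\to\sA .
\]
This map is linear. For $x\in\sJ_0$ and $b\in\sB$,
\[
\psi(bx)=\sigma^{-1}\bigl(\varphi(\sigma(b)\sigma(x))\bigr)=\sigma^{-1}\bigl(\sigma(b)\varphi(\sigma(x))\bigr)=b\,\psi(x),
\]
where the middle equality uses the left module property of $\varphi$ with $\sigma(b)\in\sigma(\sB)$. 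The right module property $\psi(xb)=\psi(x)b$ follows in the same way. Hence $\psi$ is a virtual commutant of $\sB$ in $\sA$.

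By \maxprime for $\sB$, the range of $\psi$ lies in $\sB$. Since $\varphi=\sigma\circ\psi\circ\sigma^{-1}|_{\sJ}$, the range of $\varphi$ lies in $\sigma(\sB)$. Thus $\sigma(\sB)$ has \maxprime.

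There is no real obstacle here. The only points to check are that $\sigma^{-1}(\sJ)$ is again a closed two-sided ideal of $\sB$, and that the defining identities of a virtual commutant are transported correctly under $\sigma$; both follow because $\sigma$ is a $*$-automorphism of $\sA$ mapping $\sB$ onto $\sigma(\sB)$. Unitality of the inclusion is never used.
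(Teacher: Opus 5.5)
Your proposal is correct and is essentially the paper's proof, which rests on the single observation that $(\sJ,\varphi)$ is a virtual commutant of $\sigma(\sB)$ if and only if $(\sigma^{-1}(\sJ),\,\sigma^{-1}\circ\varphi\circ\sigma)$ is a virtual commutant of $\sB$. You carry out that verification explicitly and obtain the converse by applying the same argument to $\sigma^{-1}$.
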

\begin{proof}
    The proof follows from the observation that $(\sJ, \varphi)$ is a virtual commutant of $\sigma(\sB)$ if and only if $(\sigma^{-1}(\sJ),\, \sigma^{-1} \circ \varphi \circ \sigma)$ is a virtual commutant of $\sB$.
\end{proof}

\begin{prop}\label{prop:autmorph_pres_gen_cart}
Let $\sB \subseteq \sA$ be a unital inclusion of (not necessarily finite-dimensional) $C^*$-algebras, and let $\sigma \in \textrm{Aut}(\sA)$. Then $\sB$ is generalised Cartan in $\sA$ if and only if $\sigma(\sB)$ is generalised Cartan in $\sA$.
\end{prop}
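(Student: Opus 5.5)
The plan is to verify the four defining conditions of Definition \ref{def:gen_cartan} one at a time, and show that each is preserved when $\sB$ is replaced by $\sigma(\sB)$. By symmetry, applying the argument to $\sigma^{-1}\in\textrm{Aut}(\sA)$ and the subalgebra $\sigma(\sB)$ gives the converse, so only the forward implication needs proof. Throughout we use that $\sigma$ is an isometric $*$-isomorphism of $\sA$ onto itself. In particular, $\sigma$ restricts to a $*$-isomorphism $\sB\to\sigma(\sB)$ and maps closed two-sided ideals of $\sB$ bijectively onto those of $\sigma(\sB)$.

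\textbf{Condition (i).} If $(e_\lambda)\subseteq\sB$ is an approximate unit for $\sA$, then $(\sigma(e_\lambda))\subseteq\sigma(\sB)$ is an approximate unit for $\sA$. Indeed, for $a\in\sA$ we have $\|\sigma(e_\lambda)a-a\|=\|e_\lambda\sigma^{-1}(a)-\sigma^{-1}(a)\|\to 0$, and similarly on the right. In the unital case this is in any case automatic, since $1_\sA=\sigma(1_\sA)\in\sigma(\sB)$.

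\textbf{Condition (ii).} This is exactly Lemma \ref{lem:autmorph_pres_maxprime}.

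\textbf{Condition (iii).} I will show that $\nor{\sA}(\sigma(\sB))=\sigma(\nor{\sA}(\sB))$. If $n\in\nor{\sA}(\sB)$, then
\[
\sigma(n)\sigma(\sB)\sigma(n)^*=\sigma(n\sB n^*)\subseteq\sigma(\sB),
\]
and likewise for $\sigma(n)^*\sigma(\sB)\sigma(n)$. The reverse inclusion follows by applying the same reasoning to $\sigma^{-1}$. Since $\sigma$ is a $*$-automorphism, it commutes with taking generated $C^*$-algebras:
\[
C^*(\sigma(S))=\sigma(C^*(S)).
\]
Hence
\[
C^*(\nor{\sA}(\sigma(\sB)))=\sigma(C^*(\nor{\sA}(\sB)))=\sigma(\sA)=\sA.
\]

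\textbf{Condition (iv).} Given a faithful conditional expectation $E:\sA\to\sB$, define
\[
E_\sigma:=\sigma\circ E\circ\sigma^{-1}:\sA\to\sigma(\sB).
\]
This map is linear, unital and completely positive, because it is a composition of such maps. It restricts to the identity on $\sigma(\sB)$, since $E_\sigma(\sigma(b))=\sigma(E(b))=\sigma(b)$. It is faithful, since $E_\sigma(x^*x)=\sigma(E(y^*y))$ with $y=\sigma^{-1}(x)$, and this is nonzero whenever $x\neq0$ because $\sigma$ is injective and $E$ is faithful.

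There is no genuine obstacle here. The only point requiring a moment's care is condition (iii), where one must check both inclusions of normalizers and use that automorphisms commute with taking generated $C^*$-algebras. Condition (ii) has already been handled by the preceding lemma.
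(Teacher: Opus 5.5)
Your proof is correct, and it differs from the paper's in how conditions (iii) and (iv) are handled. The paper proves the proposition in one line. It gets \maxprime from Lemma \ref{lem:autmorph_pres_maxprime}, as you do. It gets regularity from the external result \cite[Proposition 3.9]{BGK2026}, and the faithful conditional expectation from Lemma \ref{lem:faithful_exp}. Condition (i) is left implicit there because the inclusion is unital.

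You instead prove regularity directly from the equivariance $\nor{\sA}(\sigma(\sB))=\sigma(\nor{\sA}(\sB))$, which makes the argument self-contained. You also transport the expectation as $E_\sigma=\sigma\circ E\circ\sigma^{-1}$.

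The second difference matters. Lemma \ref{lem:faithful_exp} is only available for $\sA=\mn{\vec{n}}$, where it gives a faithful conditional expectation onto every unital subalgebra, so nothing needs transporting. The paper's citation therefore establishes (iv) for $\sigma(\sB)$ only in the finite-dimensional case, although the proposition is stated for arbitrary unital inclusions. Your conjugation argument works in full generality, so it proves the statement as formulated. The paper's route buys brevity and suffices for the finite-dimensional setting the paper is concerned with.
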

\begin{proof}
    This follows from Lemma \ref{lem:autmorph_pres_maxprime}, \cite[Proposition 3.9]{BGK2026}, and Lemma \ref{lem:faithful_exp}.
\end{proof}

The following theorem provides a complete characterization of generalized Cartan subalgebras in finite-dimensional $C^*$-algebras.
\begin{theorem}\label{thm:char_gen_cartan}
    Let $\sB \subseteq \sA$ be a unital inclusion of finite-dimensional $C^*$-algebras. Then the following statements are equivalent:
    \begin{itemize}
        \item[(i)] $\sB$ is a generalised Cartan subalgebra of $\sA$.
        \item[(ii)] $\sB$ is regular in $\sA$ and satisfies \maxprime.
        \item[(iii)] $\sB$ satisfies \maxprime.
        \item[(iv)] $\sB' \cap \sA \subseteq \sB$.
        \item[(v)] $\sB' \cap \sA = \sZ(\sB)$.
        \item[(vi)] $\sB$ is multiplicity free.
        \item[(vii)] there is a unique conditional expectation from $\sA$ onto $\sB$.
    \end{itemize}
\end{theorem}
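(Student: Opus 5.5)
Most of the equivalences are already available, so I will assemble them and prove only the links involving \maxprime{} directly. Proposition \ref{prop:rel_com_iff_uniq_cond} gives (iv) $\iff$ (vii), and Proposition \ref{lem:mult_free_iff_rel_com} gives (iv) $\iff$ (vi). For (iv) $\iff$ (v), I will use Lemma \ref{lem:rel_comm_invariance}. Alternatively, it follows directly: $\sZ(\sB)\subseteq \sB'\cap\sA$ always holds, and any element of $\sB'\cap\sA$ that lies in $\sB$ commutes with $\sB$, so it lies in $\sZ(\sB)$. This closes the cycle among (iv)–(vii), so it remains to bring in (i), (ii) and (iii).

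The implications (i) $\implies$ (ii) $\implies$ (iii) are immediate from Definition \ref{def:gen_cartan}. For (iii) $\implies$ (iv), I will take $x\in\sB'\cap\sA$ and define $\varphi:\sB\to\sA$ by $\varphi(b)=xb=bx$. This map is defined on the ideal $\sJ=\sB$, and it is a virtual commutant because $\varphi(bb')=bb'x=b\varphi(b')$ and $\varphi(b'b)=xb'b=\varphi(b')b$. By \maxprime, its range lies in $\sB$, so $x=\varphi(1)\in\sB$.

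For (iv)/(vi) $\implies$ (i), I check the four conditions of Definition \ref{def:gen_cartan}.
\begin{itemize}
\item Condition (i) holds because the inclusion is unital, so $1\in\sB$ is an approximate unit for $\sA$.
\item Condition (iv) is Lemma \ref{lem:faithful_exp}.
\item Regularity follows from (vi) by Lemma \ref{lem:mult-free-is-regular}. If needed, I first pass to the standard form $\sB(\vec d,\Lambda)$ via Proposition \ref{prop:subalg_of_fin_dim_alg}, using Proposition \ref{prop:autmorph_pres_gen_cart} with the inner automorphism $\mathrm{Ad}\,u$.
\item Condition (ii), \maxprime{} from (iv), is the only genuine step.
\end{itemize}

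For \maxprime, let $\varphi:\sJ\to\sA$ be a virtual commutant. Since $\sB$ is finite dimensional, $\sJ$ is a direct sum of some of the simple summands of $\sB$, so $\sJ=p\sB$ for a central projection $p\in\sZ(\sB)$ that is the unit of $\sJ$. I set $y=\varphi(p)$. Then for every $b\in\sB$, using $pb=bp\in\sJ$, I get
\[
by=b\varphi(p)=\varphi(bp)=\varphi(pb)=\varphi(p)b=yb,
\]
so $y\in\sB'\cap\sA\subseteq\sB$ by (iv). For $x\in\sJ$ we have $x=xp$, so $\varphi(x)=\varphi(xp)=x\varphi(p)=xy\in\sB$. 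Hence the range of $\varphi$ lies in $\sB$.

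I expect the main care to be needed in this last step: correctly identifying closed ideals of a finite-dimensional algebra as $p\sB$ with $p$ a central projection, and noting that virtual commutants need not be bounded a priori. Finite dimensionality makes both points harmless.
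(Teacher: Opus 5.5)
Your proposal is correct and follows essentially the same route as the paper: you cite the two propositions for (iv)$\iff$(vi) and (iv)$\iff$(vii), use the trivial virtual commutant $b\mapsto xb$ on $\sJ=\sB$ for (iii) $\Rightarrow$ (iv), and for \maxprime{} write $\sJ=p\sB$ with $p$ a central projection and show $\varphi(p)\in\sB'\cap\sA\subseteq\sB$. The only cosmetic difference is that you check the four conditions of Definition~\ref{def:gen_cartan} directly from (iv)/(vi), whereas the paper routes this through (vi) $\Rightarrow$ (ii) $\Rightarrow$ (i).
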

\begin{proof}

\noindent($(ii) \implies (i)$). This follows from Lemma \ref{lem:faithful_exp}. Furthermore, $(i) \implies (ii)$ is trivial.

    It is immediate that $(iv) \iff (v)$. Moreover, the equivalence $(iv) \iff (vi)$ follows from Lemma~\ref{lem:mult_free_iff_rel_com}, while $(iv) \iff (vii)$ is established in Lemma~\ref{lem:rel_com_iff_uniq_exp}. Therefore, 
    \[
    (i) \iff (ii)  \text{ and } (iv) \iff (v) \iff (vi) \iff (vii).
    \]
    We now prove the equivalence $(iii) \iff (iv)$.\\

    \noindent($(iii) \implies (iv)$). Let $\sB$ satisfies \maxprime and $c \in \sB'\cap \sA$. Consider the trivial virtual commutant $(\sB, \varphi)$ where $\varphi(x) := cx$ for $x \in \sB$. Due to \maxprime we get $\varphi(\sB) \subseteq \sB$. Hence:
    \[
    c = c \cdot 1_\sB = \varphi(1_\sB) \in \sB.
    \]

    \vspace{0.3cm}
    \noindent($(iv) \implies (iii)$). Assume $\sB'\cap \sA \subseteq \sB$. Suppose $(\sJ, \varphi)$ is a virtual commutatnt of $\sB$ in $\sA$. Since $\sJ$ is an ideal of $\sB$ and $\sB$ is finite dimensional so there exists a unique projection $p\in \sZ (\sB)$ such that $\sJ = p\sB$.

    Let $c:=\varphi(p)\in \sA$. See that $c$ commutes with $\sB$. Indeed, for $b\in \sB$:
    \[
    cb = \varphi(p) b = \varphi(pb) = \varphi(bp) = bc.
    \]
    Therefore $c\in \sB' \cap \sA$. Thus by $(ii)$ we get $c\in \sB$. Now let $x \in \sJ$ be arbitrary. Then $x = px$. Hence,
    \[
    \varphi(x) = \varphi(px) = \varphi(p) x = cx \in \sB.
    \]
    Therefore, $\varphi(\sJ) \subseteq \sB$. Since $(\sJ, \varphi)$ was chosen arbitrary, $\sB$ satisfies \maxprime.

    Hence we proved:
    \[
     (i) \iff (ii)  \text{ and } (iii) \iff (iv) \iff (v) \iff (vi) \iff (vii).
    \]
    Now note that $(ii) \implies (iii)$ is obvious. Finally,

    \vspace{0.3cm}
    \noindent($(vi) \implies (ii)$). Indeed we have seen $(vi)\implies (iii)$, so $\sB$ satisfies \maxprime. Also every multiplicity free algebra is regular (by Lemma \ref{lem:mult-free-is-regular}). Thus $(vi) \implies (ii)$.

    This completes the proof!
\end{proof}

By Theorem \ref{thm:char_gen_cartan}, generalized Cartan subalgebras are precisely the multiplicity free subalgebras. Furthermore, Proposition \ref{prop:num_of_mult_free} provides an explicit count of the multiplicity free embeddings of $\mn{\vec{d}}$ into $\mn{\vec{n}}$, and hence determines the number of Cartan embeddings of $\mn{\vec{d}}$ into $\mn{\vec{n}}$. We conclude our study by listing, in Table~\ref{tab:matrix_counts}, the total number of Cartan embeddings of $\mn{\vec{d}}$ into $\mn{\vec{n}}$ for several choices of $\vec{d}$ and $\vec{n}$, as determined by Proposition~\ref{prop:num_of_mult_free}. The values in the table were computed using a short script.

\begin{table}[htbp]
\centering\label{tab:multi-free}
\begin{tabular}{lllc}
\hline
$\vec{d}$ & $\vec{n}$ & \textbf{$S_d = S_n$?} & \textbf{\makecell{\small Number of Cartan Embeddings}} \\
\hline
$[1, 2, 3, 4]$ & $[5, 5]$ & Yes ($10 = 10$) & 2 \\
$[2, 2, 2, 2, 2]$ & $[4, 6]$ & Yes ($10 = 10$) & 10 \\
$[1, 1, 2, 2]$ & $[3, 3]$ & Yes ($6 = 6$) & 4 \\
$[1, 1, 1, 3]$ & $[2, 1, 3]$ & Yes ($6 = 6$) & 3 \\
$[1, 2, 3, 4, 5]$ & $[7, 8]$ & Yes ($15 = 15$) & 3 \\
$[1, 2, 2, 3]$ & $[4, 4]$ & Yes ($8 = 8$) & 2 \\
$[1, 2, 4]$ & $[7]$ & Yes ($7 = 7$) & 1 \\
$[3, 4, 5]$ & $[6, 6]$ & Yes ($12 = 12$) & 0 \\
$[1, 5, 6]$ & $[4, 8]$ & Yes ($12 = 12$) & 0 \\
$[2, 2, 2]$ & $[4, 4]$ & No ($6 \neq 8$) & 0 \\
$[1, 1, 1, 1]$ & $[3, 2]$ & No ($4 \neq 5$) & 0 \\
$[1, 1, 1, 1, 1]$ & $[1, 1, 1, 1, 1]$ & Yes ($5 = 5$) & 120 \\
\hline
\end{tabular}

\vspace{0.5cm}
\caption{Total number of multiplicity-free matrices for various fixed $\vec{d}$ and $\vec{n}$.}
\label{tab:matrix_counts}
\end{table}

    \section{Acknowledgement}
    We would like to thank Prof. Keshab Chandra Bakshi for his valuable discussions and insightful suggestions for this project. Sumit Kumar would like to thank Prof. Keshab Chandra Bakshi for the opportunity to work at IIT Kanpur under his guidance through project no. SPO/ANRF/MATH/2025271. 

    \medskip
	
	\bibliographystyle{amsalpha} 
	\bibliography{references}

\end{document}